\documentclass{amsart}
\usepackage{xy, enumerate, pstcol, amssymb, amsfonts, amsbsy, amsthm, amsmath, amscd, epsfig, latexsym, mathtools, stmaryrd, epic, eepic, eucal, multicol,multirow, fancyhdr, graphicx}

\DeclareMathAlphabet{\mathpzc}{OT1}{pzc}{m}{it}

\newenvironment{dem}{\begin{proof}[\bf Proof]}{\end{proof}}
\newtheorem{theorem}{\bf Theorem}[section]
\newtheorem*{maintheorem}{\bf Theorem 1}

\newtheorem{lemma}[theorem]{\bf Lemma}
\newtheorem{propos}[theorem]{\bf Proposition}

\newtheorem{claim}[theorem]{\bf Claim}

\theoremstyle{definition}
\newtheorem{defi}[theorem]{\bf Definition}
\newtheorem{rmk}[theorem]{\bf Remark}

\newcommand{\A}{\mathbb A}

\newcommand{\Ccal}{\mathcal C}

\newcommand{\G}{\mathbb G}

\newcommand{\Mcal}{\mathcal M}

\newcommand{\Ocal}{\mathcal O}
\newcommand{\Pro}{\mathbb P}

\newcommand{\Scal}{\mathcal S}

\newcommand{\Ucal}{\mathcal U}

\newcommand{\Z}{\mathbb Z}

\newcommand{\Isom}{\text{Isom}}

\newcommand{\GL}{\text{GL}}

\newcommand{\Spec}{\text{Spec}}

\newcommand{\pr}{\text{pr}}

\newcommand{\wt}{\widetilde}

\newcommand{\co}{\mathsf{c}}

\input xy
\xyoption{all}

\title{The integral Chow ring of $\Mcal_{0}(\Pro^r, 2)$}
\author{Renzo Cavalieri}
\author{Damiano Fulghesu}
\date{}

\address{Renzo Cavalieri, Department of Mathematics, Colorado State University, 1874 Campus Delivery, Fort Collins, CO, 80523-1874, U.S.A.}
\email{renzo@colostate.edu}
\address{Damiano Fulghesu, Department of Mathematics, Minnesota State University Moorhead, 1104 7th Ave South, Moorhead, MN 56563, U.S.A.}\email{fulghesu@mnstate.edu}

\subjclass[2020]{14C15, 14H10, 14D23 }
\keywords{Equivariant intersection theory, Chow rings, quotient stacks, moduli of curves, Kontsevich spaces of stable maps}

\begin{document}

\begin{abstract}
{We compute a presentation for the integral Chow rings of the moduli stacks of degree $2$ maps from smooth rational curves to projective space $\Pro^r$, as a quotient of a three-variable polynomial ring. The relations as $r$ varies have rich combinatorial structure: all non-trivial relations are encoded by two generating functions which are rational functions.}
\end{abstract}

\maketitle

\section{Introduction}

This article is the second step in the authors' program to compute the Chow rings with integral coefficients of the stacks $\Mcal_0(\Pro^r,d)$. We refer to the introduction of \cite{CF23} for history, context and relevance of the broader problem.  Here we focus on the work presented in this manuscript, and explain why it is a natural self-contained step in the program. Assume throughout characteristic different from $2$. We begin by stating the result.

\begin{maintheorem}[generating function form] \label{thm:mainthm}
For $r$ a positive integer, 
a presentation for the integral Chow ring of the stack $\Mcal_{0}(\Pro^r, 2)$ is given by:
\begin{equation*}
 A^\ast(\Mcal_{0}(\Pro^r, 2)) = \frac{\Z[T, c_2,c_3]}{(2c_3, (T^3 + c_2T + c_3)^{r+1}, \rho_{1,r}, \rho_{1,r+1}, (T^3+c_2T+c_3)\rho_{2,2r-2}, \rho_{2,2r}, \rho_{2,2r+2}) },
 \end{equation*}
 where $T$ is a degree one variable,  $c_i$ is a graded variable of degree $i$ and $\rho_{i,n}$  is the degree $n$ homogeneous term of the expansion at $0$ of the generating function $\mathcal{R}_i$, where

\begin{align}\label{eq:Rgf}
  \mathcal{R}_{1}   &= \frac{2}{(1-T)^2+c_2}\nonumber\\
 \mathcal{R}_{2}&=
 \frac{1}{(1-T^2)((1-T^2-c_2)^2+c_2) -c_2^2 +(T+c_3)c_3}
   \end{align}
\end{maintheorem}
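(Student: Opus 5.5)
We realize $\Mcal_0(\Pro^r,2)$ as a quotient stack and compute by equivariant intersection theory. A degree $2$ map $\Pro^1\to\Pro^r$ is given by $r+1$ binary quadrics $(f_0,\dots,f_r)$ with no common zero, up to scalar and reparametrization. Let $V=\mathrm{Sym}^2(W^\vee)$ for $W$ the standard $2$-dimensional representation. The stack is $[U/G]$, where $U\subset V^{\oplus(r+1)}$ is the open set of tuples without common zero on $\Pro^1$ (which forces the map to be nonconstant, hence of degree $2$). The group $G$ acts through $\GL_2\times\G_m$ modulo the central kernel. Since degree $2$ is even, one can instead take $G=\mathrm{PGL}_2\times\G_m$ acting on quadrics twisted by a character, or equivalently $\GL_2$ acting on $\mathrm{Sym}^2W^\vee\otimes\det^{\pm1}$.

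The plan is to use $A^*_{\mathrm{PGL}_2}=\Z[c_2,c_3]/(2c_3)$ together with the $\G_m$ hyperplane class $T$. This accounts for the generators and the relation $2c_3$. Then
\[
A^*([U/G])=A^*_G/\bigl(\text{image of }A^*_G(Z)\bigr), \qquad Z=V^{\oplus(r+1)}\setminus U,
\]
by the localization sequence.

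The key steps are as follows.

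\emph{Step 1.} Compute the top Chern class of $V^{\oplus(r+1)}$. Its $G$-equivariant Euler class is $c_3(V\otimes\chi)^{r+1}$, and $c_3(V\otimes\chi)=T^3+c_2T+c_3$ after the appropriate normalization of $T$. The zero tuple lies in $Z$, so this gives the relation $(T^3+c_2T+c_3)^{r+1}$.

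\emph{Step 2.} Stratify $Z\setminus\{0\}$ by the base locus. Use the incidence variety
\[
\tilde Z=\{(p,f): f_i(p)=0\ \forall i\}\to\Pro^1,
\]
which is a $G$-equivariant vector bundle over $\Pro^1=G/B$ (or over the appropriate homogeneous space). Its pushforward $A^*_G(\tilde Z)\to A^*_G(V^{r+1})$ surjects onto the image of $A^*_G(Z_1)$ rationally. Integrally one needs care: the map $\tilde Z\to Z$ is birational over tuples with a unique common zero and of degree $2$ over tuples with two common zeros. So we further stratify by
\begin{itemize}
\item one simple common root,
\item a double common root (all $f_i$ proportional to $\ell^2$),
\item two distinct common roots (all $f_i$ proportional to a fixed quadric $q$).
\end{itemize}
The last two strata are handled by their own envelopes, $\Pro^1\times\A^{r+1}$ and $\Pro(V)\times\A^{r+1}$ minus the discriminant. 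The envelope $\Pro^1\times\Pro^1\to\mathrm{Sym}^2\Pro^1$ fails to be a Chow envelope integrally, so the point-pairs stratum should instead be parametrized by $\Pro(V)$ directly.

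\emph{Step 3.} Compute each pushforward via $A^*_G(\Pro^1)=A^*_{B}$ and the projection formula. Pushforwards from $\Pro^1$ to a point are given by a Gysin/residue formula involving $\frac{1}{(1-T)^2+c_2}$-type expressions. Summing the resulting classes $\pi_*(\xi^k\cdot e(\text{normal}))$ over $k$ yields coefficients of a geometric series. This is how the generating function $\mathcal R_1$ arises: the pushforward of the fundamental class and of the hyperplane class give $\rho_{1,r}$ and $\rho_{1,r+1}$. Similarly, pushforwards from the $\Pro(V)$-strata (the two-root and double-root loci) produce $\mathcal R_2$. Here the denominator $(1-T^2)((1-T^2-c_2)^2+c_2)-c_2^2+(T+c_3)c_3$ should be the reciprocal of the equivariant Chern polynomial of the relevant representation restricted along $\Pro(V)\to\mathrm{pt}$. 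The relations $\rho_{2,2r}$ and $\rho_{2,2r+2}$ come from the degree-$0$ and degree-$1$ generators. The relation $(T^3+c_2T+c_3)\rho_{2,2r-2}$ comes from a class supported on the double-root sublocus, multiplied by the Euler class of the transversal factor.

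\emph{Step 4.} Show that these finitely many pushforwards generate the full image. This uses the fact that $A^*_G$ of each stratum is generated as a module over $A^*_G(\mathrm{pt})$ by $1$ and $\xi$, the hyperplane class of $\Pro^1$ (resp. low powers of the hyperplane class on $\Pro(V)$), modulo the relations already present.

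The main obstacle will be Step 2–3 integrally: proving that the stratified envelopes capture all of the image of $A^*_G(Z)$ with $\Z$-coefficients, especially the $2$-torsion near the two-root locus where $\mathrm{Sym}^2$ appears. There, we will use a Chow-envelope argument (Kimura's exact sequence) on the normalization of each stratum and compare orders of vanishing. Finally, we will verify by generating-function manipulation that the relations from higher powers of $\xi$ are redundant, i.e. lie in the ideal generated by the listed ones.
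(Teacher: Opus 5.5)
\paragraph{Verdict: genuine gap in Step 3 (integral pushforwards).} Much of your setup is sound. You have $\Mcal_0(\Pro^r,2)\cong[U/(\mathrm{PGL}_2\times\G_m)]$ with $\mathrm{PGL}_2$ acting on $V\cong\mathfrak{sl}_2$, and $A^*_G=\Z[T,c_2,c_3]/(2c_3)$. You also have the Euler-class relation $(T^3+c_2T+c_3)^{r+1}$. Your two essential envelope pieces, the incidence variety over the conic $\Pro^1$ and the rank-one locus parametrized by $\Pro(V)$, are the analogues of the paper's $Z_1$ and $Z_2$; the factor $(\Pro^2)^\vee$ of $Z_2$ plays the role of your $\Pro(V)$.

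The gap is Step 3. A Gysin/residue formula on $\Pro^1=G/B$ computes pushforwards only after restriction to a maximal torus $T_1\times\G_m\subset G$. The map $A^*_G\to A^*_{T_1\times\G_m}$ kills $c_3$, because the adjoint representation has weight $0$; its kernel is exactly the torsion ideal $(c_3)$. So your method determines each relation only modulo $c_3\cdot A^*_G$.

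The relations in the statement carry odd multiples of $c_3$, and this decides the answer. For $r=1$, the generators $(T^3+c_2T+c_3)\rho_{2,0}$ and $\rho_{2,4}=6T^4+3c_2T^2+c_3T+c_2^2$ force $c_3=2T^3$ and give $A^3\cong\Z/4$. Deleting their $c_3$-terms instead gives $A^3\cong(\Z/2)^2$. Nothing in Step 3 as formulated recovers this information. Kimura's sequence concerns surjectivity of the envelope, which the $\Pro(V)$ parametrization already settles; it says nothing about the values of the pushforwards.

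This is exactly why the paper replaces $\mathrm{PGL}_2$ by the special group $\GL_3$, via the counterpart $[\Pro(U^r)/\GL_3]$. There $A^*_{\GL_3}\subset A^*_\Delta=\Z[l_0,l_1,l_2]$ with $c_3=-l_0l_1l_2\neq 0$, so Atiyah--Bott localization over $\Pro^2$ and $(\Pro^2)^\vee$ is exact. The conditions $Q=c_1=0$ and $2c_3=0$ are imposed only at the end.

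Within your own framework, you could repair Step 3 by routing everything through $\Pro(V)$:
\begin{itemize}
\item The conic $\Pro^1\subset\Pro(V)$ is cut out by the invariant quadratic form, so its class is $2h$.
\item $A^*_G(\Pro^1)$ is generated over $A^*_G$ by $1$ and $h|_{\Pro^1}$.
\item Pushforward along $\Pro(V)\to\mathrm{pt}$ is given integrally by the Segre classes of $V$, which do see $c_3$.
\end{itemize}

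\paragraph{Generating functions and generators are asserted, not derived.} Some of your attributions are also off. In the paper, the pushforwards are computed in closed form and resummed.
\begin{itemize}
\item One gets $\sum_r\alpha^r_{1,0}=2(1-T)/((1-T)^3+(1-T)c_2+c_3)$. This becomes $\mathcal R_1$ only after the overall factor $2$ is used to discard $c_3$.
\item The identities $\mathcal A_{1,1}=(1-T)\mathcal A_{1,0}-2$ and $\mathcal A_{1,2}=-c_2\mathcal A_{1,0}$ show that $\rho_{1,r}$ and $\rho_{1,r+1}$ suffice.
\item On the rank-one component, $\alpha^r_{2,1}=T\rho_{2,2r}-(T^3+c_2T+c_3)\rho_{2,2r-2}$ and $\alpha^r_{2,2}=-T\alpha^r_{2,1}-(T^2+c_2)\rho_{2,2r}+\rho_{2,2r+2}$.
\end{itemize}
Thus $(T^3+c_2T+c_3)\rho_{2,2r-2}$ comes from the hyperplane-class pushforward on the rank-one component. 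It does not come from the double-root locus, whose envelope factors through the $\Pro(V)$-component and contributes nothing new. Likewise $\rho_{2,2r+2}$ requires the pushforward of $h^2$, not just classes of degree at most $1$.

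Finally, your aside that $\GL_2$ acting on $\mathrm{Sym}^2W^\vee\otimes\det^{\pm1}$ is an equivalent presentation is false. For any such action, a nontrivial central subgroup of $\GL_2$ acts trivially on $U$, so the quotient is not $\Mcal_0(\Pro^r,2)$. This failure of a $\GL_2$-lift in even degree is precisely what the $\GL_3$ construction is designed to circumvent.
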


The informal paraphrase of the result is: the integral Chow ring of the space of degree two rational maps to any projective space is presented as a quotient of a three-variable polynomial ring; the ideal of relations can be generated by (at most) seven relations, two of which are boring. The interesting relations can be extracted from the homogeneous coefficients of two rational functions in the three variables of the polynomial ring.

An alternative description of the main theorem  presents a generating set for the ideal of relations in closed form.

\begin{maintheorem}[closed form]
\[
 A^\ast(\Mcal_{0}(\Pro^r, 2)) \cong \frac{\Z[T,c_2,c_3]}{( 2c_3, (T^3+c_2T+c_3)^{r+1}, \{\alpha_{i,k}^r\}_{ i = 1,2  \
           k = 0,1,2
      })},   
      \]
      where
      \[
       \alpha_{1,k}^r =  \frac{2l_0^{k+1}(T+l_0)^{r+1}}{(l_1-l_0)(l_2-l_0)}+
    \frac{2l_1{^k+1}(T+l_1)^{r+1}}{(l_0-l_1)(l_2-l_1)}+
    \frac{2l_2^{k+1}(T+l_2)^{r+1}}{(l_0-l_2)(l_1-l_2)}.
      \]
      \[
       \alpha_{2,k}^r = {(-l_0)^k}\cdot\frac{(T+l_1)^{r+1}(T+l_2)^{r+1}}{(l_0-l_1)(l_0-l_2)}
    +
    {(-l_1)^k}\cdot\frac{(T+l_2)^{r+1}(T+l_0)^{r+1}}{(l_1-l_2)(l_1-l_0)}
    +{(-l_2)^k}\cdot\frac{(T+l_0)^{r+1}(T+l_1)^{r+1}}{(l_2-l_0)(l_2-l_1)}
      \]
\end{maintheorem}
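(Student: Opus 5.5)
The plan is to realize $\Mcal_0(\Pro^r,2)$ as a quotient stack and compute its equivariant Chow ring by excision. A degree $2$ map from a smooth rational curve to $\Pro^r$ is given by $r+1$ binary quadratic forms $(f_0,\dots,f_r)$ with no common zero, taken up to automorphisms of the source. So $\Mcal_0(\Pro^r,2)\cong [U/G]$, where $U\subset V^{\oplus(r+1)}$, $V=\mathrm{Sym}^2$ of the standard representation, is the open set of tuples with no common root and nonconstant ratio. Here $G$ is $\GL_2$ modulo the scalars acting compatibly, or equivalently $\mathrm{PGL}_2\times\G_m$ up to a finite twist. Since $\mathrm{char}\neq 2$, $\mathrm{PGL}_2\cong SO(3)$ and $V$ is its standard $3$-dimensional representation. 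Hence $A^*_{SO(3)}=\Z[c_2,c_3]/(2c_3)$, and with the extra $\G_m$ class $T$ the starting ring is $A^*_G=\Z[T,c_2,c_3]/(2c_3)$. The representation $V^{\oplus(r+1)}$ is $V\otimes\Ocal(1)$ taken $r+1$ times. Its Chern roots are $T+l_0,T+l_1,T+l_2$, where the $l_i$ are the formal roots of $SO(3)$ (with $l_0+l_1+l_2=0$, say, and $l_0l_1l_2$ related to $c_3$). The top Chern class of $V\otimes\Ocal(1)$ is $T^3+c_2T+c_3$, so the relation $(T^3+c_2T+c_3)^{r+1}$ is exactly the vanishing of the top Chern class of the whole representation.

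Next, the complement $Z=V^{\oplus(r+1)}\setminus U$ is stratified into two pieces.
\begin{enumerate}[(a)]
\item $Z_1$ consists of tuples sharing a common root. It is the image of a $\Pro^1$-bundle: choose a point $p\in\Pro^1$; then the tuples vanishing at $p$ form a rank $2(r+1)$ subbundle.
\item $Z_2$ consists of tuples spanning at most a line, i.e.\ $f_i=a_i q$. It is parametrized by $\Pro(V)\times \A^{r+1}$, or equivalently a rank-one locus.
\end{enumerate}
By the excision sequence, $A^*_G(U)$ is $A^*_G$ modulo the pushforwards of the equivariant Chow groups of these loci, together with the top Chern class. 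To compute the pushforwards I use Atiyah--Bott/Edidin--Graham localization with respect to the maximal torus, or a direct projective bundle computation. Pushing forward $h^k$ from the $\Pro^1$-bundle gives a Gysin-type sum over the three torus-fixed configurations. This is where the formula $\alpha^r_{1,k}$ comes from: the normal-bundle denominators are $(l_j-l_i)$, and the numerators are the classes $(T+l_i)^{r+1}$ with powers of $l_i$. Similarly, pushing forward $h^k$ from the $\Pro(V)$-bundle of rank-one tuples gives $\alpha^r_{2,k}$. By the projective bundle formula, only $k=0,1,2$ are needed, because higher powers of the hyperplane class are expressed through lower ones via the Chern classes of $V$. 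Some care is needed to justify that these pushforwards generate everything: each stratum's equivariant Chow ring is generated over $A^*_G$ by powers of the hyperplane class, since it is a vector bundle over a projective bundle, and the pieces glue via excision on $Z_1\cup Z_2$.

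Finally, I would translate the closed form into the generating function form. The step is to sum $\sum_r \alpha^r_{i,k}x^{r}$, which turns each $(T+l_i)^{r+1}$ into a geometric series. The resulting symmetric rational function in the $l_i$ is then rewritten in $T,c_2,c_3$. After this, one shows that the ideal generated by all $\alpha$'s coincides with the ideal generated by the listed $\rho$'s, modulo $2c_3$ and the top-Chern-class power. I expect the main obstacle to be this last identification: checking which $\rho_{i,n}$ are redundant, and why $\rho_{2,2r-2}$ appears only multiplied by $T^3+c_2T+c_3$. My approach would be to use linear recursions in $r$ satisfied by the $\alpha^r_{i,k}$, whose characteristic polynomial comes from the denominators of $\mathcal{R}_i$. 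These recursions express the $\alpha$'s for $k=0,1,2$ as combinations of consecutive homogeneous coefficients, and the $2$-torsion bookkeeping must be tracked carefully throughout.
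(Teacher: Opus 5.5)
\textbf{There is a genuine gap: the step that is supposed to produce the $\alpha^r_{i,k}$ does not work as proposed.}

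Several parts of your setup are fine: the reduction to $[U/(\mathrm{PGL}_2\times\G_m)]$ with $\mathrm{PGL}_2\cong SO(3)$ acting on $V$ as its standard representation, the starting ring $\Z[T,c_2,c_3]/(2c_3)$, the relation $(T^3+c_2T+c_3)^{r+1}$ from the zero section, and your two loci.

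\emph{Torus localization loses the $2$-torsion.} The group $SO(3)\times\G_m$ is not special. Restriction to its maximal torus sends $c_2\mapsto -u^2$ and $c_3\mapsto 0$, so its kernel is exactly the torsion ideal $(c_3)$. Atiyah--Bott/Edidin--Graham localization on your spaces therefore determines the relations only modulo $c_3$. Your ``formal roots'' $l_0,l_1,l_2$ are not torus weights there: on the actual torus they become $u,0,-u$, so $l_0l_1l_2=0$. The lost information matters. For $r=1$ the theorem contains $\alpha^1_{2,1}=2T^3+c_3$, which the torus cannot distinguish from $2T^3$. Replacing one by the other changes $A^3(\Mcal_0(\Pro^1,2))$ from $\Z/4$ (generated by $T^3$, with $c_3=2T^3$) to $(\Z/2)^2$.

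\emph{The geometry of $Z_1$ does not match the formula.} Your resolution of $Z_1$ lives over the source $\Pro^1$. That curve has only two fixed points for the torus of $\mathrm{PGL}_2$, and its equivariant Chow ring needs only two module generators. Moreover, $[\Pro^1/\mathrm{PGL}_2]\to B\mathrm{PGL}_2$ is a nontrivial Brauer--Severi conic bundle, not the projectivization of a rank-two bundle. So neither the projective bundle formula nor a three-term fixed-point sum with $k=0,1,2$ applies to it. In particular, the factor $2l_j$ in $\alpha^r_{1,k}$ has no source in your description. As written, the formulas are matched to the target rather than derived.

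\textbf{The missing idea is the $\GL_3$-counterpart.} The paper replaces $B\mathrm{PGL}_2$ by $[\Scal/\GL_3]$, so that $\Mcal_0(\Pro^r,2)\cong[\Pro(U^r)/\GL_3]$. The source curve becomes a smooth conic in $\Pro^2$, and the map is given by $r+1$ linear forms.
\begin{enumerate}
\item Since $\GL_3$ is special, $A^*_{\GL_3}(-)$ embeds in $A^*_\Delta(-)$, where $l_0,l_1,l_2$ are genuinely independent weights.
\item The conic is allowed to range over all of $\Pro(W_{2,2})$. Then the ambient $\Pro(\check F^r)$ and the envelope components have free $\Delta$-equivariant Chow rings, so localization is faithful. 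The components are $Z_1=\Pro(\pr_2^*K_1^{\oplus(r+1)})$, over the universal conic $\Pro(K_2)\to\Pro^2$, and $Z_2=\Pro(W_{2,2})\times(\Pro^2)^\vee\times\Pro^r$.
\item The torsion enters only through excision of singular conics, which imposes $c_1=0$ and $2c_3=0$ (hence also $Q=c_1=0$).
\item Localization over the three fixed points of $\Pro^2$ and of $(\Pro^2)^\vee$ then produces exactly $\alpha^r_{1,k}$ and $\alpha^r_{2,k}$. The factor $2l_j$ is $(Q+2l_j)|_{Q=0}$, the class of conics through $p_j$.
\end{enumerate}

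\textbf{How your route could be repaired.} You can do it in the same spirit.
\begin{enumerate}
\item Embed the source as the invariant conic $C\subset\Pro(V^\vee)$. Use $[C]=2H$, together with the fact that $A^*_{SO(3)}(C)$ is generated over $A^*_{SO(3)}$ by $1$ and $H|_C$.
\item This rewrites the $Z_1$ pushforwards as $p_*\bigl(2H^{k+1}(T+H)^{r+1}\bigr)$ for the honest projective bundle $p\colon\Pro(V^\vee)\to B(SO(3)\times\G_m)$.
\item Compute these Gysin pushforwards, and those for $Z_2$, by pulling back universal $\GL_3$-equivariant formulas along $BSO(3)\to B\GL_3$.
\end{enumerate}
This last step is precisely what the $\GL_3$-counterpart packages.
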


The broad strategy for computing the presentation parallels \cite{CF23}:  present $\Mcal_0(\Pro^r,2)$ as a global quotient stack by a special group, thus identifying the Chow ring of the quotient with the group equivariant cohomology of the space being quotiented \cite{EG,Tot:CR};  show $\Mcal_0(\Pro^r,2)$ is an open substack of some  manageable ambient space, which provides a set of generators; construct an equivariant envelope for the complement and compute the pushforward of the Chow rings of the components of the envelope to obtain the relations. The localization computation on the equivariant envelope produces the presentation in the closed form version of Theorem \ref{thm:mainthm}. We verify the equivalence of the two forms of Theorem \ref{thm:mainthm} in Section \ref{sec:gfform}.

\vspace{0.5cm}

 The stacks $\Mcal_0(\Pro^r,d)$ admit a natural presentation as  global quotients by $\Pro \GL_2$: informally, considering the source of a map to be a projective line with homogeneous coordinates $(s:t)$, a point of $\Mcal_0(\Pro^r,d)$ can be represented by $(r+1)$ homogeneous polynomials in $s,t$ of degree $d$. All other representatives of the given moduli point are obtained as the orbit of the $\Pro\GL_2$ action reparameterizing the source curve, thus exhibiting $\Mcal_0(\Pro^r,d)$ as the global quotient of an open subset $U$ of projective space (where the homogeneous coordinates are the coefficients of the $(r+1)$ polynomials) by $\Pro\GL_2$.
 
For degree $d$ odd, a $\GL_2$ action on the affine cone (minus the zero section) over $U$ lifts the $\Pro\GL_2$ action and presents $\Mcal_0(\Pro^r,d)$ as a global quotient stack by the special group $\GL_2$ (\cite[Prop. 2.4]{CF23}). 

When $d$ is even such a lift does not exist. We  make use of a general construction from \cite{DL21} to exhibit $\Mcal_0(\Pro^r,d)$ as a global quotient stack by the special group $\GL_3$. Informally, for a moduli point $[f: C\to \Pro^r]\in \Mcal_0(\Pro^r,d)$, we view the source curve not as a parameterized $\Pro^1$, but as a smooth conic in $\Pro^2$. The map is then determined by $(r+1)$ sections of the restriction to the conic of the line bundle $\Ocal_{\Pro^2}(1)$, with all other representatives obtained as the orbit of a $\GL_3$ action lifting the standard linear action on $\Pro^2$.   

We give a slicker, albeit less intuitive, description of this construction: denoting by $\Scal\subset H^0(\Ocal_{\Pro^2}(2))\cong\mathbb{A}^6$ the locus of smooth conic equations, the map $\Scal\to [Spec(k)/\Pro\GL_2]$ is a $\GL_3$ torsor. The pull-back $Y$ of $[U/\Pro\GL_2]\cong \Mcal_0(\Pro^r,d)\to [Spec(k)/\Pro\GL_2]$ via this morphism is therefore also a $\GL_3$ torsor, implying $[Y/\GL_3]\cong \Mcal_0(\Pro^r,d)$.

We thus need to compute the $\GL_3$ equivariant Chow ring of $Y$. One can observe that $Y$ is an open subset of a projective bundle over $\Scal$, and we can therefore compute a presentation for its Chow ring by constructing an equivariant envelope for the complement of $Y$. For this task the case $d=2$ stands out with respect to all other even degrees for its simplicity, which led us to write its treatment as a standalone paper. In this case, there are two components of the envelope: one is a projective bundle over the universal conic, which is itself a projective bundle over $\Pro^2$. The second is isomorphic to a product of projective spaces. The localization computation, which can in both cases be performed relative to a morphism to $\Pro^2$, is then extremely simple and elegant.

We conclude  with a brief discussion of our observations and expectations for the remaining cases: when $d\geq 4$, the components of an equivariant envelope should be describable in terms of the geometry of either a subvariety of the absolute Hilbert scheme of points in $Hilb^n(\Pro^2)$, or the relative Hilbert scheme of the universal smooth conic over its base $\underline{Hilb}^n_{\Ucal/\Scal}$. Atiyah-Bott localization is not immediately  available in these cases, either because the space is non-proper or too singular, but we are exploring an approach through virtual intersection theory which we believe might allow for applying the localization theorem again; if this approach succeeds, then the combinatorics of localization and of trying to treat uniformly all even cases should make  the computations an interesting challenge for future work.

The paper is organized as follows: in Section \ref{sec:GL3} we provide some background on     $\GL_3$ counterparts (\cite{DL21}) to make this work more self-contained and set  notation. In Section \ref{sec:prese} we apply this construction to give a presentation of the stack $\Mcal_0(\Pro^r,2)$ as a global $\GL_3$ quotient stack. In Section \ref{Sec:env} we set up a $\GL_3$ equivariant envelope and compute the pushforward of the Chow rings of its components. In Section \ref{sec:gfform} we show how these computations give rise to the efficient generating function form of Theorem \ref{thm:mainthm}. We conclude by showing some explicit computations in low dimensions.

\subsection{Acknowledgments}
We would like to thank Henry Liu for interesting conversations related to this project, and Kavli IPMU, where these conversations happened, for their kind hospitality of the first author. We also would like to thank Zhengning Hu for valuable comments that helped us improve this manuscript.
The first author acknowledges support from the Simons Foundation travel grant MPS-TSM-00007937. 

\section{Preliminaries}
\subsection{$\GL_3$ counterparts of $\Pro \GL_2$ quotients}
\label{sec:GL3}

We summarize the $\GL_3$-counterpart construction of \cite{DL21}.

\begin{defi}[Definition 1.1, \cite{DL21}] Let $X$ be a scheme of finite type over $\Spec(k)$ endowed with a $\Pro \GL_2$ action. A $\GL_3$ counterpart of $X$ is a scheme $Y$ with a $\GL_3$ action and an isomorphism $\varphi$ of stacks:
\[
\varphi: [Y/\GL_3] \xrightarrow{\cong} [X/\Pro \GL_2].
\]
\end{defi}
Let $\Scal \subset W_{2,2}:=H^0(\Pro^2, \Ocal_{\Pro^2}(2))$ be the space of non-degenerate homogeneous quadratic forms in three variables. We have an isomorphism \cite[Proposition 1.5]{DL21}:
\[
\psi:[\Scal/\GL_3] \xrightarrow{\cong} [\Spec(k)/\Pro \GL_2],
\]
where the action of $\GL_3$ on $\Scal$ is defined by:
\begin{equation}\label{eq:action}
A \cdot P(x_0, x_1, x_2) := \det(A) P(A^{-1}(x_0, x_1, x_2)).
\end{equation}
In other words, $\Scal$ is a $\GL_3$ counterpart of the point. The isomorphism $\psi$ lifts to a morphism:
\[
\Scal \to [\Spec(k)/\Pro \GL_2],
\]
which makes $\Scal$ a $\GL_3$-torsor over 
the classifying space $[\Spec(k)/\Pro \GL_2]$.

Let $X$ be a $\Pro \GL_2$-scheme. The structure morphism $X \to \Spec(k)$ induces a morphism of quotient stacks:
\[
[X/\Pro \GL_2] \to [\Spec(k)/\Pro \GL_2],
\]
which is representable. We  consider the following cartesian diagram:
\[
\xymatrix{
  Y \ar[r] \ar[d] & [X/\Pro \GL_2] \ar[d] \\
  \Scal \ar[r] & [\Spec(k)/\Pro \GL_2]
}
\]
where $Y$ (which is a scheme thanks to the representability of the morphism $[X/\Pro \GL_2] \to [\Spec(k)/\Pro \GL_2]$) is a $\GL_3$-torsor over $[X/\Pro \GL_2]$ and, consequently we get an isomorphism:
\[
[Y/\GL_3] \xrightarrow{\cong} [X/\Pro \GL_2].
\]
In other words, the cartesian product of $[X/\Pro \GL_2]$ and $\Scal$ over $[\Spec(k)/\Pro \GL_2]$ is a $\GL_3$ counterpart of $X$.

A $\GL_3$ counterpart $Y$ of $X$ yields an isomorphism:
\[
A^*_{\GL_3}(Y) \cong A^*_{\Pro \GL_2}(X);
\]
the $\GL_3$ equivariant Chow ring of $Y$ therefore computes the Chow ring of $[X/\Pro\GL_2]$.

\subsection{{The Chow ring of a subset of a projective bundle}}

We introduce some notions that will be useful later on. Consider a $\Scal$-space $Y$ consisting of the restriction to $\Scal$ of the projectivization of a $\GL_3$-equivariant open subset $U$ of a $\GL_3$-equivariant vector bundle $F$ over $W_{2,2}$, as in the following commutative diagram with cartesian square:
\begin{equation}
\xymatrix{
  Y \cong \Pro(U|_{\Scal}) \ar[r] \ar[d] & \Pro(U = F\smallsetminus Z) \ar@{^{(}->}[r] \ar[d] & \Pro(F) \ar[dl]\\
  \Scal \ar[r] & W_{2,2} &
}.
\end{equation}

For any space $X$, if $i: \Ucal \to X$ is a $\GL_3$-equivariant open inclusion and $\pi: \Ucal^\co \to X$  is the $\GL_3$-equivariant closed inclusion of the complement of $\Ucal$ in $X$, we have an isomorphism:
\begin{equation} \label{eq:excision}
A^*_{\GL_3} \left( \Ucal \right) \cong \frac{A^*_{\GL_3} \left( X \right)}{\pi_* \left( A^*_{\GL_3} \left( \Ucal^\co \right) \right)}.
\end{equation}
Using \eqref{eq:excision}, we can compute the equivariant Chow ring of $\Pro(U|_{\Scal})$ in two steps. First we determine the equivariant Chow ring of $\Pro(F|_{\Scal})$:
\[
A^*_{\GL_3}\left( \Pro \left( F|_{\Scal} \right) \right) \cong \frac{A^*_{\GL_3} \left( \Pro(F) \right)}{\pi_* \left( A^*_{\GL_3} \left( \Pro \left( F|_{\Scal^\co} \right) \right) \right)},
\]
where we identify the complement of $\Pro{\left( F|_{\Scal} \right)}$ in $\Pro(F)$ with $\Pro \left( F|_{\Scal^\co} \right)$.

Then we use $A^*_{\GL_3}\left( \Pro \left( F|_{\Scal} \right) \right)$ to determine the Chow ring of $\Pro(U|_{\Scal})$:
\[
A^*_{\GL_3}{\Pro(U|_{\Scal})} \cong \frac{A^*_{\GL_3} \left( \Pro \left( F|_{\Scal} \right) \right)}{\pi_* \left( A^*_{\GL_3} \left(\Pro(U|_{\Scal})^\co) \right) \right)}.
\]
where $\Pro(U|_{\Scal})^\co$ is the complement in $\Pro(F|_{\Scal})$.

In the following lemma, we give a presentation for the intersection ring  $\left( A^*_{\GL_3} \left( \Pro \left( F|_{\Scal} \right) \right) \right)$.

\begin{lemma}
For $F$ a $\GL_3$ equivariant vector bundle over $W_{2,2}$, we  have an isomorphism: 
\[
 \left(A^*_{\GL_3} \left( \Pro(F|_{\Scal})\right) \right) \cong \frac{A^*_{\GL_3}\left( \Spec(k) \right)[T]}{\left( \pi_*\left(A^*_{\GL_3}\left( \Scal^\co \right) \right), P(T) \right)}
\]
for a suitable hyperplane class $T$ in $A^*_{\GL_3} \left( \Pro(F) \right)$ and a polynomial $P(T)$ in $\A^*_{\GL_3}(W_{2,2}) \cong \A^*_{\GL_3} (\Spec(k))$.
\end{lemma}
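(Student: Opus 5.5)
We combine the projective bundle formula on $\Pro(F)$ with excision \eqref{eq:excision} for the closed complement $\Pro(F|_{\Scal^\co})\subset\Pro(F)$.

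First we compute $A^*_{\GL_3}(\Pro(F))$. Since $W_{2,2}$ is a $\GL_3$-representation, hence an equivariant vector bundle over $\Spec(k)$, homotopy invariance gives $A^*_{\GL_3}(W_{2,2})\cong A^*_{\GL_3}(\Spec(k))$ via pullback along $p:W_{2,2}\to\Spec(k)$. Let $T=c_1(\Ocal_{\Pro(F)}(1))$ be the equivariant hyperplane class, where $\GL_3$ acts on $\Ocal(1)$ through its linearization on $F$. The equivariant projective bundle theorem, applied to approximations $F\times_{\GL_3}U_N\to W_{2,2}\times_{\GL_3}U_N$ of the Borel construction, gives
\[
A^*_{\GL_3}(\Pro(F))\cong A^*_{\GL_3}(\Spec(k))[T]/(P(T)),
\qquad P(T)=\sum_{i=0}^{n} c_i^{\GL_3}(F)\,T^{n-i},
\]
with $n=\operatorname{rk}F$. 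The sign convention depends on whether $\Pro$ denotes lines or quotients. The Chern classes $c_i^{\GL_3}(F)$ lie in $A^*_{\GL_3}(W_{2,2})\cong A^*_{\GL_3}(\Spec(k))$, so $P$ has the required form.

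Next we apply \eqref{eq:excision} to the open inclusion $\Pro(F|_{\Scal})\subset\Pro(F)$. Its closed complement is $\Pro(F|_{\Scal^\co})$, with inclusion $\pi'$, and the ring in the statement is the quotient by $\pi'_*A^*_{\GL_3}(\Pro(F|_{\Scal^\co}))$. The projective bundle theorem on $\Scal^\co$ (no smoothness of $\Scal^\co$ is needed) shows that $A^*_{\GL_3}(\Pro(F|_{\Scal^\co}))$ is generated as an $A^*_{\GL_3}(\Scal^\co)$-module by $T|^j$ for $0\le j<n$. Here $T|$ is the restriction of $T$, since $\Ocal(1)$ restricts to $\Ocal(1)$. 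So every class has the form $\sum_j q^*(\beta_j)\,\pi'^*T^j$ with $\beta_j\in A^*_{\GL_3}(\Scal^\co)$ and $q:\Pro(F|_{\Scal^\co})\to\Scal^\co$.

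The projection formula gives $\pi'_*(q^*\beta\cdot\pi'^*T^j)=T^j\cdot\pi'_*q^*\beta$. The square formed by $\Pro(F|_{\Scal^\co})\to\Pro(F)$ over $\Scal^\co\to W_{2,2}$ is cartesian, and the projection $\Pro(F)\to W_{2,2}$ is flat. Flat pullback therefore commutes with proper pushforward, giving $\pi'_*q^*\beta=p_F^*\pi_*\beta$, where $\pi:\Scal^\co\to W_{2,2}$ and $p_F:\Pro(F)\to W_{2,2}$. Under the identification $A^*_{\GL_3}(W_{2,2})\cong A^*_{\GL_3}(\Spec(k))$, the image of $\pi'_*$ is thus the ideal generated by $\pi_*A^*_{\GL_3}(\Scal^\co)$ inside $A^*_{\GL_3}(\Spec(k))[T]/(P(T))$. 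Passing to the quotient yields the stated presentation.

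The main obstacle is making this base change rigorous equivariantly: the compatibility $\pi'_*q^*=p_F^*\pi_*$ in equivariant Chow groups. We would verify it on each finite approximation $(-)\times_{\GL_3}U_N$, where it is the usual compatibility of proper pushforward with flat pullback in a fiber square. Independence of $N$ in bounded degrees then passes it to the limit.
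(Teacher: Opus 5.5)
Your proposal is correct and follows essentially the same route as the paper. Both arguments use the projective bundle formula for $\Pro(F)$, then excision of the closed complement $\Pro(F|_{\Scal^\co})$, then the projection formula to see that pushforwards of $T^j$ times classes from $\Scal^\co$ lie in the ideal generated by $\pi_*A^*_{\GL_3}(\Scal^\co)$. Your base change identity $\pi'_*q^*=p_F^*\pi_*$, together with the Chow-group form of the projective bundle theorem over the singular $\Scal^\co$, makes rigorous the step the paper only states informally, namely viewing $\pi_*(A^*_{\GL_3}(\Scal^\co))$ ``formally as a subalgebra'' of $A^*_{\GL_3}(\Pro(F))$.
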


\begin{dem} Since $F$ is a vector bundle over the vector space $W_{2,2}$, we have an isomorphism:
\[
A^*_{\GL_3} \left( \Pro(F) \right) \cong \frac{A^*_{\GL_3}(W_{2,2})[T]}{(P(T))}
\]
where $T$ is the first Chern class of the $\Ocal(1)$ bundle of $\Pro(F)$ and $P(T)$ is the total equivariant Chern polynomial of $F$ over $W_{2,2}$.

In a similar way, since $F|_{\Scal^\co}$ is a vector bundle over $\Scal^\co$, we have an isomorphism:
\[
A^*_{\GL_3} \left( \Pro(F|_{\Scal^\co}) \right) \cong \frac{A^*_{\GL_3}(\Scal^\co)[T]}{(P'(T))}
\]
where $T$ is the first Chern class of the $\Ocal(1)$ bundle of $\Pro(F|_{\Scal^\co})$ and $P'(T)$ is the total equivariant Chern polynomial of $F|_{\Scal^\co}$ over $\Scal^\co$. The class $T$ in $A^*_{\GL_3} \left( \Pro(F|_{\Scal^\co}) \right)$ is the pull back of the class $T$ in $A^*_{\GL_3} \left( \Pro(F) \right)$, and we keep the same name for simplicity.

Now, the image of the push-forward:
\[
\pi_*: A^*_{\GL_3} \left( \Pro(F|_{\Scal^\co}) \right) \to A^*_{\GL_3} \left( \Pro(F) \right)
\]
is the ideal in $A^*_{\GL_3} \left( \Pro(F) \right)$ generated by $\pi_* \left(A^*_{\GL_3}\left( \Scal^\co \right)\right)$, formally seen as a subalgebra of $A^*_{\GL_3} \left( \Pro(F) \right)$, and the push-forward of the powers of the class $T$. However, since the class $T$ is the pullback of the class $T$ in $A^*_{\GL_3} \left( \Pro(F) \right)$, thanks to the pull-push formula, the push-forward of any power of the class $T$ is in the ideal generated by $\pi_*(1)$ which is already included in $\pi_* \left(A^*_{\GL_3}\left( \Scal^\co \right)\right)$. In conclusion, we have the isomorphism:
\begin{equation}\label{eq:int.amb}
 \left(A^*_{\GL_3} \left( \Pro(F|_{\Scal})\right) \right) \cong \frac{A^*_{\GL_3}\left( \Spec(k) \right)[T]}{\left( \pi_*\left(A^*_{\GL_3}\left( \Scal^\co \right) \right), P(T) \right)}.
\end{equation}
\end{dem}

\subsection{The equivariant Chow ring of $\Scal$.}
\label{sec:ecrs}

We compute the Chow ring of the quotient stack $[\Scal/\GL_3]$ which is isomorphic to the classifying space $[\Spec(k)/\Pro \GL_2]$. This result is well known;  we present the computations here to clarify our notation and to explain some computational tools that will be used throughout the paper.

Let $\Delta$ be the maximal torus of $\GL_3$ represented by diagonal matrices and let $V$ be the standard representation of $\GL_3$. The total character of the $\Delta$-module $V^\vee$ (the dual of $V$) can be expressed as a sum of linearly independent characters $\lambda_0$, $\lambda_1$, and $\lambda_2$ and we get $A^*_{\Delta} = \Z[l_0, l_1, l_2]$ where $l_i := c_1(\lambda_i)$. The Weyl group $S_3$ acts on $A^*_{\Delta}$ by permuting the classes $l_i$ and, consequently, $A^*_{\GL_3}(V) \cong A^*_{\GL_3}(\Spec(k))=\Z[c_1, c_2, c_3]$ where:

\begin{eqnarray*}
c_1 &=& -l_0 - l_1 - l_2,\\
c_2 &=& l_0l_1 + l_0l_2 + l_1l_2,\\
c_3 &=& -l_0l_1l_2.
\end{eqnarray*}
\begin{rmk}
    We spell out our convention for projectivization, which will be needed throughout the paper.  We chose
\(l_0,l_1,l_2\) to be the Chern roots of \(V^\vee\).  If
\(p_i\in \mathbb{P}(V)\) is the torus fixed point corresponding to the
\(i\)-th coordinate line, and \(p_i^\vee\in\mathbb{P}(V^\vee)\) is the
dual fixed point corresponding to the $i$-th coordinate hyperplane, then the
canonically linearized hyperplane classes $H, H^\vee$ satisfy
\[
H|_{p_i}:=\left.c_1^\Delta\bigl(\mathcal{O}_{\mathbb{P}(V)}(1)\bigr)\right|_{p_i}
=l_i,
\qquad
H^\vee|_{p_i^\vee}:=\left.c_1^\Delta\bigl(\mathcal{O}_{\mathbb{P}(V^\vee)}(1)\bigr)\right|_{p_i^\vee}
=-l_i.
\]
Consequently,
\[
e_\Delta\bigl(T_{p_i}\mathbb{P}(V)\bigr)
=\prod_{j\neq i}(l_i-l_j),
\qquad
e_\Delta\bigl(T_{p_i^\vee}\mathbb{P}(V^\vee)\bigr)
=\prod_{j\neq i}(l_j-l_i).
\]
\end{rmk}

It is well known (see e.g. \cite[Proposition 6]{EG}) that one may identify the $\GL_3$-equivariant intersection ring of any space with the $S_3$-invariant part of the $\Delta$-equivariant intersection ring. We therefore set out to compute the torus equivariant intersection ring of $\Scal$.

Let $P(x_0,x_1,x_2)$ be an element of $\Scal$, that is to say a rank 3 homogeneous quadratic form in the projective coordinates $x_0,x_1,x_2$ of $\Pro(V)$. We observe that the action  defined in \eqref{eq:action} restricts to $\Scal$.

Let $\Scal^\co$ be the complement of $\Scal$ in $W_{2,2}$, then we have:
\[
A^*_{\GL_3}(\Scal) \cong \frac{\Z[c_1,c_2,c_3]}{\left( \pi_*(A^*_{\GL_3}(\Scal^\co))\right)}.
\]

To determine $\pi_*(A^*_{\GL_3}(\Scal^\co))$, we consider the universal curve over $W_{2,2}$:
\[
V_{2,2} \subset W_{2,2} \times \Pro^2(V),
\]
where $V$ is the standard representation of $\GL_3$. The hypersurface $V_{2,2}$ is given by the bihomogeneous equation of bidegree $(1,2)$:
\[
M(a,x):= a_{2,0,0}x_0^2 + a_{0,2,0}x_1^2 + a_{0,0,2}x_2^2 + a_{1,1,0}x_0 x_1 + a_{1,0,1}x_0 x_2 + a_{0,1,1}x_1 x_2.
\]

Let $H$ be the hyperplane class in $\Pro^2(V)$. For simplicity, we also call $H$ the pullback of $H$ to $W_{2,2} \times \Pro^2(V)$. In both cases, the class $H$ satisfies the relation:
\[
(H-l_0)(H-l_1)(H-l_2) = H^3 + c_1H^2 + c_2H + c_3=0.
\]

Let $\widetilde{\Scal^\co}$ be the locus of singular points on the degenerate conics. More precisely, $\widetilde{\Scal^\co}$ is the subscheme of $V_{2,2}$ given by the equations $\left \{ M_{x_i}(a,x)=0\right \}_{i=0,1,2}$, where $M_{x_i}(a,x)$ is the partial derivative of $M(a,x)$ with respect to the variable $x_i$. A straightforward dimensional argument shows that $\widetilde{\Scal^\co}$ is a complete intersection. This means that the equivariant class of $\widetilde{\Scal^\co}$ in $W_{2,2} \times \Pro^2(V)$ is the product of the hyperplane classes $J_i$ associated to the equations $M_{x_i}(a,x)=0$.

Now, let $A:=\left[ \begin{array}{ccc} t_0 & 0 & 0 \\ 0 & t_1 & 0\\ 0 & 0 & t_2 \end{array}\right]$ be an element of $\Delta$. By applying $A$ according to the action \eqref{eq:action} to the torus invariant hyperplane (for example) $J_0 = \{M_{x_0} = 0\}$, we get (see \cite[Lemma 2.4]{EF09}):
\[
\left [ J_0 \right] = H + c_1(t_1 \cdot t_2) = H - l_1 - l_2.
\]
Similarly, we get:
\begin{eqnarray*}
\, [J_1] &=& H - l_0 - l_2, \\
\, [J_2] &=& H - l_0 - l_1.
\end{eqnarray*}
Therefore, the class of $\widetilde{\Scal^\co}$ in $W_{2,2} \times \Pro^2(V)$ is:

\[
[J_0] \cdot [J_1] \cdot [J_2] = (H-l_1-l_2)(H -l_0 - l_2)(H - l_0 - l_1)
\]
which after reducing mod $(H-l_0)(H-l_1)(H-l_2)$ becomes:
\[
c_1H^2 + c_1^2H + (c_1c_2 - 2c_3).
\]
Consider the following commutative diagram:
\begin{equation*}
\xymatrix{
  \widetilde{\Scal^\co} \ar[r]^{\hspace{-1cm} \pi} \ar[d]_{\text{pr}_{1}} & W_{2,2} \times \Pro^2(V) \ar[d]^{\text{pr}_{1}} \\
  \Scal^\co \ar[r]^{\pi} & W_{2,2}
}
\end{equation*}
The map $pr_1$ over $\Scal^\co$ is surjective. Moreover, also the pushforward is surjective, since $\widetilde{\Scal^\co}$ can be seen as a Chow envelope with two components:
\begin{itemize}
\item one that maps 1-1 to the image corresponding to the locus of degenerate conics that are union of two distinct lines with the intersection acting as the unique singular point;
\item one that corresponds to double lines and the restricted morphism is a projective bundle.
\end{itemize}
On both components, the push-forward is surjective, therefore, the whole pushforward is surjective.
By commutativity of the diagram, we have the identity:
\[
\pi_* \left( \Scal^\co \right) = \text{pr}_{1*} \left( \pi_* \left( \widetilde{\Scal^\co} \right) \right).
\]
Following the proof of \cite[Proposition 4.2]{FV18},
we obtain that the ideal $\pi_* \left( \widetilde{\Scal^\co} \right)$ is generated by the class $\left[ \widetilde{\Scal^\co} \right]$.
Arguing as in \cite[Theorem 4.5]{FV18},  the ideal $\text{pr}_{1*} \left( \pi_* \left( \widetilde{\Scal^\co} \right) \right)$ is generated by the coefficients of $\left[ \widetilde{\Scal^\co} \right]$ seen as a polynomial in $H$. In conclusion, $\text{pr}_{1*}(\Scal^\co)$ is generated by $c_1, c_1^2$
and
$c_1c_2 - 2c_3$.
%
%

Therefore $\pi_*(A^*_{\GL_3}(\Scal^\co))= (c_1, 2c_3)$ and we have the isomorphism:
\[
A^*_{\GL_3}(\Scal) \cong \frac{\Z[c_1,c_2,c_3]}{(c_1, 2c_3)}.
\]

\begin{rmk}
The use of the universal curve was helpful to determine the ideal $\pi_* \left( \Scal^\co \right)$ since we were able to lift up $\Scal^\co$ to a complete intersection $\widetilde{\Scal^\co}$, and the push-forward of its intersection ring is easier to compute.
\end{rmk}

We can now make \eqref{eq:int.amb} more explicit:
\begin{equation}
 \left(A^*_{\GL_3} \left( \Pro(F|_{\Scal})\right) \right) \cong \frac{A^*_{\GL_3}\left( \Spec(k) \right)[T]}{\left( c_1, 2c_3, P(T) \right)}.
\end{equation}

\section{A Presentation of the Stack $\Mcal_{0}(\Pro^r, 2)$}
\label{sec:prese}
 Let $V_{2,2} \subset W_{2,2} \times \Pro^2$ be the universal curve over $W_{2,2}$ and $\Ccal$ its restriction on $\Scal$. We have the following diagram:
\begin{equation}
\xymatrix{
\Ccal \ar[d]_{\pr_1}  \ar[r]^{\subseteq} & V_{2,2} \ar[d]^{\pr_1}  \ar[r]^{\pr_2} & \Pro^2\\
\Scal \ar[r]^{\subseteq} & W_{2,2}
}
\end{equation}
Define the following sheaves:
\begin{eqnarray}
E^{r} &:=& \pr_{2}^* \left( \Ocal_{\Pro^2}(1)^{\oplus r+1} \right) \otimes \Ocal_{V_{2,2}}
\nonumber \\
F^{r} &:=& \pr_{1*} \left( E^{r}\right)
\end{eqnarray}
respectively over $V_{2,2}$ and $W_{2,2}$, which are locally free (vector bundles) of ranks $r+1$ and $3r+3$. 

\begin{rmk}
  We provide a more intuitive description of these objects.  $W_{2,2}$ is the space of equations for plane conics and $V_{2,2}$ the affine cone over the universal conic. Given a conic equation $\varphi_C$ and a point $p\in \Pro^2$ with $\varphi_C(p) = 0$, a point in the fiber $E^r_{(\varphi_C,p)}$ is just an ordered $(r+1)-$tuple of elements in the fiber $\Ocal_{\Pro^2}(1)_p$; a point in the fiber of $F^r_{\varphi_C}$ consists of (the restriction to the conic  $C$ of) $(r+1)$ homogeneous linear forms $\vec{\sigma} = (\sigma_0, \ldots, \sigma_r)$.  
\end{rmk}

There is a natural evaluation morphism of sheaves:
\begin{align*}
  \pr_1^*(F^{r}) &\to E^{r}  \\
  (\varphi_C,p, \vec{\sigma})&\mapsto (\varphi_C,p, \vec{\sigma}(p)),
\end{align*}
whose kernel we denote $K^{r}$.

Denote $U^{r} := \left. (F^{r} \smallsetminus \pr_1(K^{r})) \right |_{\Scal} $ the open subset of $\left. F^{r} \right |_{\Scal}$ consisting  of points $(\varphi_C, \vec{\sigma})$ such that the $(r+1)$ linear forms do not have a common intersection on the conic $C$.

\begin{theorem}
There is an isomorphism:
\[
\Mcal_{0}(\Pro^r, 2) \cong [\Pro(U^{r})/\GL_3].
\]
\end{theorem}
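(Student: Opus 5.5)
The plan is to reduce the statement to the $\GL_3$-counterpart construction of Section~\ref{sec:GL3}: it suffices to show that $\Pro(U^r)$ is a $\GL_3$ counterpart of $X$, the open subset of $\Pro\bigl(H^0(\Pro^1,\Ocal(2))^{\oplus r+1}\bigr)$ consisting of base-point-free $(r+1)$-tuples. By the description in the introduction, $[X/\Pro\GL_2]\cong \Mcal_0(\Pro^r,2)$; this is the standard parametrized-map presentation, and I take it as known (as in \cite{CF23} for the $\Pro\GL_2$ description). So I need a $\GL_3$-equivariant morphism $\Pro(U^r)\to[X/\Pro\GL_2]$ fitting into a cartesian square with $\Scal\to[\Spec(k)/\Pro\GL_2]$. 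By the discussion following \cite[Proposition 1.5]{DL21}, this makes $\Pro(U^r)$ the fiber product, and hence $[\Pro(U^r)/\GL_3]\cong[X/\Pro\GL_2]$.

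\textbf{Step 1: the morphism.} I would construct a family of stable maps over $\Pro(U^r)$ directly, rather than going through $[X/\Pro\GL_2]$. Over $\Pro(U^r)$ take the pullback of the universal conic $\Ccal\to\Scal$. This is a smooth proper family of genus $0$ curves, since the conics in $\Scal$ are nondegenerate (this is where $\mathrm{char}\neq 2$ is used). On it we have the tautological $(r+1)$-tuple of sections $\vec\sigma$ of $\pr_2^*\Ocal_{\Pro^2}(1)\otimes\Ocal_{\Pro(U^r)}(1)$. The definition of $U^r$ (no common zero on $C$) says exactly that these sections generate, so they define a morphism $\Ccal\times_{\Scal}\Pro(U^r)\to\Pro^r$. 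Its degree is $2$, because $\Ocal_{\Pro^2}(1)$ restricts to degree $2$ on a conic. This gives $\Pro(U^r)\to\Mcal_0(\Pro^r,2)$. The map is $\GL_3$-invariant: $A\in\GL_3$ moves $(\varphi_C,\vec\sigma)$ to an isomorphic family via $A\colon\Pro^2\to\Pro^2$. Scalars act trivially on the projectivization up to the weight $\det(A)$ on $\varphi_C$, which does not change the conic. The map therefore descends to $[\Pro(U^r)/\GL_3]\to\Mcal_0(\Pro^r,2)$, compatible with $[\Scal/\GL_3]\to[\Spec(k)/\Pro\GL_2]$ (forget the map, keep the conic).

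\textbf{Step 2: cartesianity / torsor property.} Next I would show that the induced map $\Pro(U^r)\to \Scal\times_{[\Spec k/\Pro\GL_2]}\Mcal_0(\Pro^r,2)$ is an isomorphism. A $T$-point of the right side is a conic $\varphi_C$ over $T$, a family of stable maps $f\colon P\to\Pro^r$ with $P$ a $\Pro^1$-bundle, and an isomorphism $P\cong C$. Transporting $f$ to $C$, $f^*\Ocal(1)$ is a degree-$2$ line bundle on $C$. Fiberwise it is isomorphic to $\Ocal_{\Pro^2}(1)|_C$, so globally $f^*\Ocal(1)\cong\Ocal_{\Pro^2}(1)|_C\otimes L$ for a line bundle $L$ on $T$. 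The coordinates of $f$ then become sections of $\Ocal_{\Pro^2}(1)|_C\otimes L$. Because $H^0(\Ocal_{\Pro^2}(1))\to H^0(\Ocal_C(1))$ is an isomorphism (restriction is injective on linear forms, and both spaces are $3$-dimensional), these sections lift uniquely to $F^r\otimes L$, i.e.\ to a $T$-point of $\Pro(U^r)$. The two constructions are mutually inverse, which uses that $F^r$ is the pushforward and that cohomology and base change holds since $H^1(\Ocal_C(1))=0$.

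\textbf{Main obstacle.} I expect the hardest step to be the line-bundle bookkeeping in Step 2. One has to check that $f^*\Ocal(1)\otimes\Ocal_{\Pro^2}(-1)|_C$ is pulled back from $T$, using that $\mathrm{Pic}$ of a conic bundle is generated by $\Ocal(1)$ modulo the base. One also has to check that the ambiguity in $L$ and its trivializations is exactly the $\G_m$ of projectivization, so that no extra stabilizers appear and $[\Pro(U^r)/\GL_3]$ gets no spurious automorphisms. If this becomes messy, I would instead verify the claim on geometric points and automorphism groups, and then invoke that $\Pro(U^r)\to\Mcal_0(\Pro^r,2)$ is a $\GL_3$-torsor by base change from the torsor $\Scal\to[\Spec(k)/\Pro\GL_2]$.
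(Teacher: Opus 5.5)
Your proposal is correct and follows the paper's proof. Both identify $\Pro(U^r)$ with the fiber product $\Scal\times_{[\Spec(k)/\Pro\GL_2]}\Mcal_0(\Pro^r,2)$ from the $\GL_3$-counterpart construction, and both verify the universal property using that linear forms on $\Pro^2$ restrict isomorphically to sections of $\Ocal_{\Pro^2}(1)|_C\cong\Ocal_{\Pro^1}(2)$. The only difference is that you phrase it via families of maps from conic bundles rather than via the frame torsor $P_\beta$ and equivariant maps to $X$; this makes the twist by $L$ (the projective ambiguity) explicit.

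One small slip is in your obstacle paragraph. $\mathrm{Pic}$ of a conic bundle is not generated by $\Ocal(1)$ modulo the base: already for a single smooth conic, $\Ocal_{\Pro^2}(1)|_C$ is twice a generator of $\mathrm{Pic}(C)\cong\mathbb{Z}$. What Step 2 actually uses is the seesaw fact that a fiberwise-degree-zero line bundle on a $\Pro^1$-bundle is pulled back from $T$, and that fact is true.
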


\begin{dem}
Let $\mathcal{W}$ be the vector space of degree 2 forms in 2 variables.
From \cite{CF23}, we have the following isomorphism:
\[
\Mcal_{0}(\Pro^r, 2) \cong \left[ X/ \Pro\GL_2 \right],
\]
where $X$ is the open locus in $\Pro(\mathcal{W}^{\oplus{r+1}})$ of $r+1$ forms that do not have a common factor. By \cite{DL21}, a $\GL_3$ counterpart of $X$ is the cartesian product:

\begin{equation}
\xymatrix{
  Y \ar[r] \ar[d] & [X/\Pro \GL_2] \ar[d]^{\alpha} \\
  \Scal \ar[r]^(0.25){\beta} & [\Spec(k)/\Pro \GL_2]
}
\end{equation}

Therefore we need to show that $\Pro(U^r)$ is isomorphic to $Y$. We define  two morphisms $\psi$ and $\pi$:
\begin{equation} \label{diag:stackcomm}
\xymatrix{
  \Pro(U^r) \ar[r]^{\psi} \ar[d]_{\pi} & [X/\Pro \GL_2] \ar[d]^{\alpha} \\
  \Scal \ar[r]^(0.25){\beta} & [\Spec(k)/\Pro \GL_2]
}.
\end{equation}
The morphism $\pi$ is the natural bundle projection. The stack morphism $\psi$ is uniquely determined by an object in $[X/\Pro \GL_2]$ via the following cartesian diagram:
\[
\xymatrix{
  P_{\psi} \ar[r] \ar[d] & X \ar[d] \\
  \Pro(U^r) \ar[r]^(0.4){\psi}  & [X/\Pro \GL_2]
}
\]
where $P_{\psi}$ is a $\Pro\GL_2$ torsor over $\Pro(U^r)$ with an equivariant map to $X$. A natural choice for $P_{\psi}$ is the pullback $\pi^*(P_{\beta})$ of the $\Pro\GL_2$ torsor $P_{\beta} \to \Scal$ (seen as an object in $[\Spec(k)/\Pro \GL_2]$) defined by the morphism $\beta$ via the following cartesian diagram:
\[
\xymatrix{
  P_{\beta} \ar[r] \ar[d] & \Spec(k) \ar[d] \\
  \Scal \ar[r]^(0.25){\beta} & [\Spec(k)/\Pro \GL_2],
}
\]

Note that the fibers $P_{\beta} \to \Scal$ on each smooth conic $C$ define the isomorphism group $\Isom_{k}(\Pro^1, C)$.

We conclude by checking the following properties of diagram \eqref{diag:stackcomm}.
\begin{itemize}
\item {\bf Stack-commutative:}  For every scheme morphism $T \xrightarrow{f} \Pro(U^r)$, the images $\alpha (\psi(f))$ and $\beta(\pi(f))$ in $[\Spec(k)/\Pro \GL_2]$ are isomorphic.  By construction, the images $\alpha (\psi(f))$ and $\beta(\pi(f))$ are both pullbacks on $T$ of $P_{\beta}$ and therefore they are isomorphic.
\item {\bf Universal property of fiber products:}  For every stack-commutative diagram:
\begin{equation}
\label{diag:up}
\xymatrix{
  W \ar[r]^(0.4){\theta} \ar[d]_{\rho} & [X/\Pro \GL_2] \ar[d]^{\alpha} \\
  \Scal \ar[r]^(0.25){\beta} & [\Spec(k)/\Pro \GL_2],
}
\end{equation}
we want to show that there exists a morphism (unique up to isomorphisms) $W \to \Pro(U^r)$ that makes all the resulting diagrams commute.
By the commutativity of diagram \eqref{diag:up}, the pullback $P_{\theta} \to W$ of $X \to [X/\Pro \GL_2]$ via $\theta$ is isomorphic to the pullback of $P_{\beta} \to \Scal$ via $\rho$.

In particular, we get $\Pro GL_2$-equivariant morphisms from $P_{\theta}$ to $X$ and to $P_{\beta}$. In other words, a point on $P_{\theta}$ defines a conic $C$, an isomorphism $\gamma:  \Pro^1\to C$  and a projective equivalence class $[\vec{Q}]$ of $(r+1)$ (ordered) quadratic forms on $\Pro^1$.
Via the isomorphism $\gamma^*(\Ocal_{\Pro^2}(1))\cong \Ocal_{\Pro^1}(2)$, $[\vec{Q}]$  defines a projective equivalence class of $(r+1)$ linear forms on 
$\Pro^2$. This construction defines a $\Pro GL_2$-equivariant morphism $P_{\theta} \to P_{\psi}$. Such morphism induces a unique morphism on the quotient spaces $P_{\theta}/ \Pro GL_2 \to P_{\psi}/\Pro GL_2$ which gives  the requested (unique up to isomorphisms) morphism:
\[
P_{\theta}/ \Pro GL_2 \cong W \to \Pro(U^r) \cong P_{\psi}/\Pro GL_2.
\]

\end{itemize}

\end{dem}

The natural $\G_m$-action on $W_{2,2}$ which induces the standard multiplication of a quadratic form by a non-zero scalar lifts to the bundle $F^{r} \smallsetminus \pr_1(K^{r})$ by acting trivially on the homogeneous coordinates of the fibers. After taking the restriction of $F^{r} \smallsetminus \pr_1(K^{r})$ over $W_{2,2} \smallsetminus {0}$ and the quotient by $\G_m$, we get a bundle $\check U^{r}$ over $\Pro(W_{2,2}) \cong \Pro^5$. The $\GL_3$ action on $F^{r} \smallsetminus \pr_1(K^{r})$ naturally descends to $\check U^{r}$

We can reduce the computation of $A^\ast(\Mcal_{0}(\Pro^r, 2))$ to the computation of $A^\ast_{\GL_3}(\Pro(\check U^r))$ as shown in the following Lemma.

\begin{lemma}\label{lem:comp} We have the following isomorphism:
\[
    A^\ast(\Mcal_{0}(\Pro^r, 2)) \cong A^\ast_{\GL_3} (\Pro( U^r)) \cong \left(\frac{A^\ast_{\GL_3}(\Pro(\check U^r))}{(Q, c_1, 2c_3)} \right)
    \]
where $Q$ is the pullback to $\check U^{r}$ of the hyperplane class in $\Pro(W_{2,2})$.
\end{lemma}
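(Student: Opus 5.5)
The first isomorphism is immediate from the preceding Theorem together with the identification of Chow rings of quotient stacks with equivariant Chow rings \cite{EG}:
\[
A^\ast(\Mcal_0(\Pro^r,2))\cong A^\ast([\Pro(U^r)/\GL_3])=A^\ast_{\GL_3}(\Pro(U^r)).
\]
The content of the lemma is therefore the second isomorphism. I will obtain it by passing from $\Pro(U^r)$ to $\Pro(\check U^r)$ via a $\G_m$-quotient, and then excising the locus of singular conics.

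\textbf{Step 1: $\G_m$-torsor and the relation $Q=0$.} Write $\Scal_0\subset\Pro(W_{2,2})$ for the open set of smooth conics, so that $\Scal\to\Scal_0$ is the restriction of the tautological $\G_m$-torsor $W_{2,2}\smallsetminus 0\to\Pro(W_{2,2})$. Since $\G_m$ acts trivially on the fiber coordinates, $\Pro(U^r)$ is the pullback of $\Pro(\check U^r)|_{\Scal_0}$ along this torsor. Hence $\Pro(U^r)$ is the complement of the zero section in the pullback of the line bundle $\Ocal_{\Pro(W_{2,2})}(-1)$ to $\Pro(\check U^r)|_{\Scal_0}$. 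The $\GL_3$-linearization here is the one induced by the action \eqref{eq:action}, which is twisted by $\det$.

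The standard excision sequence for a line bundle minus its zero section then gives
\[
A^\ast_{\GL_3}(\Pro(U^r))\cong A^\ast_{\GL_3}\bigl(\Pro(\check U^r)|_{\Scal_0}\bigr)/(c_1^{\GL_3}(L)),
\]
where $L$ is that line bundle. Its equivariant first Chern class is $\pm Q$ plus a multiple of $c_1$ coming from the $\det$ twist. Modulo $c_1$, this relation becomes $Q=0$. Since $c_1$ will be killed in Step 2 anyway, it is harmless to include $c_1$ together with $Q$ in the ideal at this stage.

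\textbf{Step 2: excising the singular conics.} Next I compute $A^\ast_{\GL_3}(\Pro(\check U^r)|_{\Scal_0})$ from $A^\ast_{\GL_3}(\Pro(\check U^r))$ by excision \eqref{eq:excision}. The complement is $\Pro(\check U^r)|_{\Scal_0^\co}$, a bundle over the discriminant hypersurface of singular conics. By the same pull-push argument as in the Lemma on $\Pro(F|_\Scal)$, its pushforward is the ideal generated by pullbacks of $\pi_\ast A^\ast_{\GL_3}(\Scal_0^\co)\subset A^\ast_{\GL_3}(\Pro(W_{2,2}))$.

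To identify this ideal, I would mirror Section \ref{sec:ecrs}, using $\widetilde{\Scal^\co_0}\subset\Pro(W_{2,2})\times\Pro^2$. This is again a complete intersection cut out by the three equations $M_{x_i}$, now with classes $Q+H-l_j-l_k$. After pushing forward along $\pr_1$, the resulting ideal in $\Z[c_1,c_2,c_3,Q]$ reduces modulo $Q$ to the ideal $(c_1,c_1^2,c_1c_2-2c_3)=(c_1,2c_3)$. Alternatively, one can note that the cone construction gives $A^\ast_{\GL_3}(\Scal)=A^\ast_{\GL_3}(\Scal_0)/(Q+\text{const}\cdot c_1)$, and compare directly with the already computed $A^\ast_{\GL_3}(\Scal)=\Z[c_1,c_2,c_3]/(c_1,2c_3)$.

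Combining Steps 1 and 2 yields the quotient by $(Q,c_1,2c_3)$.

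\textbf{Main obstacle.} I expect the delicate point to be the ideal bookkeeping. The generators of the discriminant ideal upstairs involve $Q$, so I must check that, after imposing $Q=0$ from Step 1, they reduce exactly to $(c_1,2c_3)$ and leave nothing extra. I also need the sign and $\det$-twist in $c_1(L)$ to be harmless, which holds precisely because $c_1$ lies in the ideal. I will first try the direct route, reducing the complete-intersection class modulo $Q$ and then applying the \cite{FV18} arguments verbatim.
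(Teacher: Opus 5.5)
Your argument is correct and uses the same two ingredients as the paper, but in the opposite order. The ingredients are excision of the singular-conic locus, which produces $(c_1,2c_3)$ as in Section~\ref{sec:ecrs}, and the $\G_m$-torsor $\Scal\to\Pro(\Scal)$, which contributes one linear relation in $Q$ and $c_1$.

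The paper excises first, over the affine base $W_{2,2}$, so that Section~\ref{sec:ecrs} applies verbatim. Only then does it pass to $\Pro(\check U^r)$ via \cite[Lemma 3.2]{EF09}, getting the relation $Q-c_1$ for the untwisted class $Q$ of Section~\ref{Sec:env}. This is exactly your ``alternative'' at the end of Step 2, and it needs no new computation. Your direct route stays over $\Pro(W_{2,2})$ throughout, so you never touch the origin of $W_{2,2}$, where the $\G_m$-quotient does not exist. The price is redoing the complete-intersection computation on $\Pro(W_{2,2})\times\Pro^2$.

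One point in the direct route needs tightening: the sentence ``it is harmless to include $c_1$ together with $Q$'' is circular as written. What you need is $c_1\in (c_1(L))+I_{\mathrm{disc}}$. Reducing $I_{\mathrm{disc}}$ modulo $Q$ proves this only when $c_1(L)=-Q$ exactly.

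That does hold for the $\det$-twisted canonical linearization of $\Ocal_{\Pro(W_{2,2})}(1)$. This is precisely the linearization for which your classes $Q+H-l_j-l_k$ are right: this $Q$ pulls back to $0$ on $W_{2,2}\smallsetminus 0$, so those classes restrict to the classes $H-l_j-l_k$ of Section~\ref{sec:ecrs}. So the ``multiple of $c_1$'' in your Step 1 is zero, and your argument closes.

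With the paper's untwisted $Q$, whose torsor relation is $Q-c_1$, the discriminant divisor has class $3Q-2c_1$, since it must restrict to $c_1$ when $Q=c_1$. Reducing modulo $Q$ alone would then give only $2c_1$ in degree one, not $c_1$. So fix the twisted linearization throughout; the resulting ideal $(Q,c_1,2c_3)$ is independent of that choice.
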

\begin{dem}
 Generalizing the results from Section \ref{sec:ecrs}, we have:
\begin{equation}\label{eqn.red.to.closed}
A^\ast_{\GL_3} \left( \Pro(\check U^r) \right) \cong \frac{A^\ast_{\GL_3} \left( \Pro(F^{r} \smallsetminus \pr_1(K^{r})) \right)}{(c_1, 2c_3)}.
\end{equation}
Since $\check U^r$ is obtained from $F^{r} \smallsetminus \pr_1(K^{r})$ by projectivization of the base, we follow the argument of \cite[Lemma 3.2]{EF09}. By considering the action \eqref{eq:action} on the base, we get:
\begin{equation}\label{eqn.red.to.proj.base}
A^\ast_{\GL_3} \left( \Pro(F^{r} \smallsetminus \pr_1(K^{r})) \right) \cong \frac{A^\ast_{\GL_3}(\Pro(\check U^r))}{(Q-c_1)}.
\end{equation}
We conclude by combining the ring isomorphisms (\ref{eqn.red.to.closed}) and (\ref{eqn.red.to.proj.base}).
\end{dem}

\section{Computation of the presentation}
\label{Sec:env}

In this section we  compute the $\GL_3$-equivariant cohomology of $\Pro(\check{U}^r)$.
We begin with some steps aimed at simplifying the computations. Denote by $\Delta\leq \GL_3$ the maximal torus consisting of diagonal matrices. It is well known (see e.g. \cite[Proposition 6]{EG}) that one may identify the $\GL_3$-equivariant cohomology of any space with the $S_3$-invariant part of the $\Delta$-equivariant cohomology. We therefore set out to compute the torus equivariant cohomology $A^\ast_{\Delta}(\Pro(\check U^r)).$
We identify the torus equivariant parameters with the Chern roots of the dual of the standard representation (see Section \ref{sec:ecrs}), so $A^\ast_\Delta(\Spec(k)) = \mathbb{Z}[l_0,l_1,l_2]$ and the classes $c_i$ are the elementary symmetric functions evaluated at $-l_0,-l_1,-l_2$.

The second simplifying step consists in observing that in the action \eqref{eq:action}, $W_{2,2}$ is made into a representation by inducing the natural action from the dual of the standard representation and then tensoring with equivariant line bundle ``$\det$". Since $\det$ has first Chern class  $c_1$ and in Lemma \ref{lem:comp}  we are eventually setting $c_1 = 0$, we may as well ignore the tensoring by $\det$ in the action on $W_{2,2}$. This way, anywhere the variable $x_i$ appears (whether as a coordinate of $\Pro^2$, or as part of a monomial in $\Pro(W_{2,2})$), it has weight $l_i$. 

Consider the open immersion:
\[
\Pro(\check{U}^{r}) \stackrel{i}{\hookrightarrow} \left. \Pro(\check{F}^{r} \right |_{\Pro(\Scal)})
\]
Denote by ${Z}$ the Zariski closure of the complement $\left.\Pro(\check{F}^{r} \right |_{\Pro(\Scal)}) \smallsetminus \Pro(\check{U}^{r})$ in $\Pro(\check{F}^{r})$. By the excision sequence we have
\begin{equation} \label{eq:excis}
A^\ast_{\Delta}(\Pro(\check U^r))\cong   
\frac{A^\ast_{\Delta}(\left. \Pro(\check{F}^{r} \right |_{\Pro(\Scal)}))}{i_\ast (A^\ast_{\Delta}({Z}))}
\end{equation}

In Sections \ref{sec:Z1},\ref{sec:Z2}, we construct an {\it equivariant envelope} (\cite[Definition 18.3]{Ful},\cite[Section 2.6]{EG}) 

\[
\xymatrix{
\widetilde{Z} = Z_1 \sqcup Z_2  \ar[rr]^{\widetilde{\pi} =\pi_1\sqcup \pi_2}& &  Z,}
\]
i.e.  a proper, torus equivariant map such that for every subvariety $V$ of $Z$, there is a subvariety $\wt{V}$ of $\wt{Z}$ with the morphism $\widetilde{\pi}$ mapping $\wt{V}$ birationally onto $V$.
Informally, points in $Z_1$ correspond to a conic, a point $p$ on the conic, and $r+1$ equations of lines through $p$; for points of $Z_2$ we have a conic, two points on the conic and $r+1$ equations of lines through both points.

By \cite[Theorem 3.1]{CF23},
\begin{equation} \label{eq:env}
   i_\ast( A^\ast_{\Delta}({Z}))= (i\circ\tilde{\pi} )_\ast (A^\ast_{\Delta}(\widetilde{Z})).
\end{equation}

Combining \eqref{eq:excis}, \eqref{eq:env}, in order to compute $A^\ast_{\Delta}(\Pro(\check U^r))$ we must compute the Chow ring of the ambient space $\left. \Pro(\check{F}^{r} \right |_{\Pro(\Scal)})$, and then quotient by the ideal of relations obtained from the pushforward of the Chow ring of the two components $Z_1, Z_2$ of the envelope. The next few subsections are dedicated to these computations.

\subsection{The Chow ring of the ambient}
In this section we give a presentation for the Chow ring of $\left. \Pro(\check{F}^{r} \right |_{\Pro(\Scal)})$. We recall the standard presentation of the equivariant cohomology of projective space $\Pro^5 = \Pro (W_{2,2})$, see e.g. \cite[Section 27.1]{Mirror-symmetry-book}:
\begin{equation}
A^\ast_{\Delta}(\Pro^5) \cong \frac{\Z[Q, c_1,c_2, c_3]}{(P_2(Q))},
\end{equation}
where $Q =  c_1^{eq}(\Ocal_{\Pro^5}(1))$ is the (equivariant, canonically linearized) hyperplane class and for any $d$ we define the polynomial 

\begin{equation}
    \label{eq:pd}
P_d(Q) =  \prod_{\begin{array}{c} 0\le i,j,k \le d\\
i+j+k = d\end{array}}
\left(
Q+i l_0+jl_1+kl_2
\right).
\end{equation}
Then $P_2(Q)$ is the relation arising from the equivariant Chern class of the bundle $W_{2,2}\to B\Delta$.

\begin{propos}\label{prop:amb}
\[
A^\ast_{\Delta}(\left. \Pro(\check{F}^{r} \right |_{\Pro(\Scal)})) \cong \frac{\Z[T, c_2, c_3]}{(2c_3, (T^3+c_2T+c_3)^{r+1})}.
\]   
\end{propos}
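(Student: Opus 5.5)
The plan is to identify $\check F^r$ with an equivariantly trivial bundle whose weights are explicit, and then apply the projective bundle formula together with the excision results already established (the Lemma on $A^*_{\GL_3}(\Pro(F|_{\Scal}))$, the computation $A^*_{\GL_3}(\Scal)\cong \Z[c_1,c_2,c_3]/(c_1,2c_3)$, and Lemma \ref{lem:comp}). Throughout, the ring in the statement is understood after imposing $Q=c_1=0$ as in Lemma \ref{lem:comp}, and after passing to $S_3$-invariants.

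\emph{Step 1: $F^r$ is a trivial bundle with known weights.} For every (possibly degenerate) conic $C\subset\Pro^2$ there is the restriction sequence
\[
0\to \Ocal_{\Pro^2}(-1)\to \Ocal_{\Pro^2}(1)\to \Ocal_C(1)\to 0 .
\]
Since $H^0$ and $H^1$ of $\Ocal_{\Pro^2}(-1)$ both vanish, restriction gives an isomorphism $H^0(\Pro^2,\Ocal(1))\cong H^0(C,\Ocal_C(1))$ that is uniform in the conic. Cohomology and base change then gives a $\GL_3$-equivariant isomorphism
\[
F^r\cong W_{2,2}\times \big(H^0(\Pro^2,\Ocal(1))\big)^{\oplus r+1}.
\]
This descends to $\check F^r$ over $\Pro^5$, because the $\G_m$ acts trivially on the fibres. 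With the simplification that the $\det$ twist is ignored, each linear form $x_i$ has weight $l_i$. The torus Chern roots of $\check F^r$ are therefore $l_0,l_1,l_2$, each with multiplicity $r+1$, up to a global sign fixed by the projectivization convention of the Remark in Section \ref{sec:ecrs}.

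\emph{Step 2: the projective bundle formula.} This gives
\[
A^*_\Delta(\Pro(\check F^r))\cong \frac{\Z[l_0,l_1,l_2][Q,T]}{\big(P_2(Q),\ \prod_i (T\pm l_i)^{r+1}\big)}.
\]
Here $\prod_i(T- l_i)=T^3+c_1T^2+c_2T+c_3$, while $\prod_i(T+l_i)=T^3-c_1T^2+c_2T-c_3$. The sign ambiguity is harmless: once $c_1=0$ and $2c_3=0$ are imposed, $c_3=-c_3$, so both choices give $(T^3+c_2T+c_3)^{r+1}$. Taking $S_3$-invariants, which is legitimate by \cite[Proposition 6]{EG} since all relations are symmetric, replaces $\Z[l_0,l_1,l_2]$ by $\Z[c_1,c_2,c_3]$.

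\emph{Step 3: restricting to $\Pro(\Scal)$.} By the Lemma on $A^*_{\GL_3}(\Pro(F|_{\Scal}))$, this restriction only adds the pushforward ideal from the discriminant locus, which the argument of Section \ref{sec:ecrs} computes as $(c_1,2c_3)$. Passing through the identifications of Lemma \ref{lem:comp} (projectivizing the base, then setting $Q=c_1$ and $c_1=0$) imposes $Q=0$. It remains to check that $P_2(Q)$ becomes redundant. Setting $Q=0$ gives
\[
P_2(0)=\prod_{i+j+k=2}(i\,l_0+j\,l_1+k\,l_2),
\]
which contains the factors $2l_0\cdot 2l_1\cdot 2l_2=-8c_3$. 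Hence $P_2(0)$ lies in $(2c_3)$. What remains is $\Z[T,c_2,c_3]/(2c_3,(T^3+c_2T+c_3)^{r+1})$.

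The main obstacle is bookkeeping rather than geometry: one has to make sure that the weights of $\check F^r$ and the normalization of $T$ are compatible with the conventions and with dropping the $\det$ twist. One also has to check that the order of operations (restricting to $\Pro(\Scal)$, modding out $Q-c_1$, then $c_1$) does not create extra relations. I would first verify the base-change isomorphism in Step 1 over degenerate conics (double lines), since that is where equivariant triviality could fail; the vanishing of $H^1(\Ocal_{\Pro^2}(-1))$ should settle it.
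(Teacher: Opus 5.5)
Your proposal is correct and follows essentially the same route as the paper. Both arguments apply the projective bundle formula over $\Pro^5$, with the relation coming from $r+1$ copies of $V^\vee$, and then restrict to $\Pro(\Scal)$, which imposes $Q=c_1$, $c_1=0$ and $2c_3=0$. Both finish by noting that $P_2(0)$ is redundant because of its factor $8l_0l_1l_2=-8c_3$. Your additions fill in steps the paper uses silently: the triviality of $F^r$ via the restriction sequence, and the observation that the sign of $c_3$ in $R(T)$ does not matter modulo $2c_3$.
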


\begin{proof}
    The Chow ring of $\Pro (\check{F}^{r})\stackrel{\pi}{\to} \Pro^5$ has a natural presentation coming from the projective bundle formula \cite[Theorem 3.3]{Ful}:
\begin{equation} \label{eq:prespbF}
A^\ast_{\Delta}(\Pro (\check F^{r})) \cong \frac{\Z[Q,T, c_1, c_2, c_3]}{(P_2(Q),R(T))},
\end{equation}
where $T = c_1^{eq}(\Ocal_{\Pro (F^{r})}(1))$ is the relative hyperplane class, and $R(T)$ is the polynomial homogenizing the Chern class of $\check{F}^{r}$. The vector bundle $\check{F}^r$ is a trivial (but not equivariantly trivial) bundle consisting of the direct sum of $(r+1)$ copies of $H^0(\Pro^2, \Ocal_{\Pro^2}(1))$, i.e. of the dual of the standard representation of $\GL_3$. It follows that
\[
R(T) = (T^3-c_1T^2+c_2T-c_3)^{r+1}.
\]

Restricting the projective bundle to the part over the locus $\Pro(\Scal) \subset \Pro^5$, one obtains the relations computed in Section \ref{sec:ecrs}: $Q = c_1$ and  $c_1 = 2c_3 = 0$. Plugging these into \eqref{eq:prespbF} one obtains:
\[
A^\ast_{\GL_3}(\left. \Pro(\check{F}^{r} \right |_{\Pro(\Scal)})) \cong \frac{\Z[T, c_2, c_3]}{(2c_3, P_2(0), (T^3+c_2T+c_3)^{r+1})}.
\]
The proof of the proposition is  concluded by observing that $P_2(0)$ contains a factor of $8c_3$ coming from the weights of the quadratic forms $x_0^2,x_1^2, x_2^2$ which are $2l_0,  2l_1$ and $ 2l_2$; $P_2(0)$ is therefore a redundant relation as the polynomial is already contained in the ideal generated by $2c_3$.
\end{proof}

\subsection{The first component $Z_1$ of the envelope}
\label{sec:Z1}
We  construct a projective variety $Z_1$ together with a birational, regular, $\Delta$-equivariant morphism $\pi_1: Z_1 \to {Z}$; this gives the first component of the envelope. 

We define $Z_1$ as  a subset of $\Pro(\pr_1^\ast (\check F^{r}))$:
\[
Z_1:= \{(C, p\in C, \sigma_0, \ldots , \sigma_r) | \sigma_i(p) = 0\ \  \mbox{for all $i$}\},
\]
where $C$ denotes a (not necessarily smooth) conic, the pair $(C,p)$ denotes a point in the universal conic $C_{2,2}$, and $\sigma_i$ is a linear form restricted to the conic, i.e. a global section of $\Ocal_{\Pro^2}(1)_{|C}$. The map $\pi_1: Z_1 \to Z$ is the pull-back of the natural morphism that forgets the point $p$, i.e. the universal family from the universal conic $C_{2,2}$.

We observe that $Z_1$ may be described as a projective bundle over a projective bundle over $\Pro^2$. First, define $K_2$  to be the rank $5$ bundle obtained as the kernel of the  evaluation sequence:
\[
0\to K_2 \to H^0(\Pro^2, \Ocal_{\Pro^2}(2))\times \Pro^2 \stackrel{ev_2}{\to} \Ocal_{\Pro^2}(2) \to 0.
\]
The universal conic $C_{2,2}$ is naturally identified with $\Pro(K_2)$. Analogously, we consider the bundle $K_1$ defined as the kernel of the evaluation sequence:
\[
0\to K_1 \to H^0(\Pro^2, \Ocal_{\Pro^2}(1))\times \Pro^2 \stackrel{ev_1}{\to} \Ocal_{\Pro^2}(1) \to 0.
\]
The affine cone $\widehat{Z}_1$ is naturally identified with $(r+1)$ copies of the pullback of $K_1$, so:
\[
{Z}_1 = \Pro(\pr_2^\ast K_1^{\oplus (r+1)}).
\]
Presenting $Z_1$ as a projective bundle over a projective bundle over $\Pro^2$ allows us to identify a set of generators for $A^\ast_{\Delta}(Z_1)$:
\begin{itemize}
    \item the equivariant parameters $l_0, l_1, l_2$ generating $A^\ast_{\Delta}(\Spec(k))$;
    \item the (pullback of the) equivariant hyperplane class of $\Pro^2$, $H = c_1^{eq}(\Ocal_{\Pro^2}^{can}(1))$;
    \item the relative (equivariant) hyperplane class of the projective bundle $\Pro(K_2) \to \Pro^2$, which we denote by $Q$;
    \item the relative (equivariant) hyperplane class of the projective bundle $\Pro(\pr_2^\ast K_1^{\oplus (r+1)}) \to \Pro(K_2)$, which we denote by $T$.
\end{itemize}
The names of the generators have been purposefully chosen: one may show via some standard diagram chasing that the classes $Q,T$ are the pull-back via $\pi_1$ of the homonymous classes in \eqref{eq:prespbF}. It  follows by the projection formula that $\pi_{1 \ \ast}(A^\ast_{\Delta}(Z_1))$ is generated as an ideal by the pushforwards of the classes $1, H, H^2$. We observe that these classes are pulled-back via the morphism $\Pi_1: Z_1 \to \Pro^2$ obtained by composing the two successive projective bundle projections.

By the Atiyah-Bott localization isomorphism ({\cite[Theorem 2]{EGL}}), 
 the class $H^k$ in $A^*_\Delta(\Pro^2)\otimes\Z[\frac{1}{l_0},\frac{1}{l_1}, \frac{1}{l_2}]$ may be expressed in the fixed point basis as:
	\begin{equation}\label{eq:ABloc} H^k = \sum_{j = 0}^2 \frac{i_{p_j}^*(H^k)}{c_{top}^\Delta(T_{p_{j}}\Pro^2)} [p_j] =  \frac{l_0^k}{(l_0-l_1)(l_0-l_2)}[p_0] +
    \frac{l_1^k}{(l_1-l_0)(l_1-l_2)}[p_1] +
    \frac{l_2^k}{(l_2-l_0)(l_2-l_1)}[p_2], 
    \end{equation}
where $[p_j]$ denotes the equivariant cohomology class of the torus fixed point where the $j$-th coordinate is non-zero.

Denote by $L_j = \Pi_1^{-1}(p_j)$ the inverse image in $Z_1$ of the $j$-th fixed point of $\Pro^2$. The map $\pi_{1| L_j}$ is an isomorphism onto its image, which consists of the torus invariant subvariety of $\Pro (\check F^{r})$ defined by the vanishing of:
\begin{itemize}
    \item the homogeneous coordinate of $W_{2,2}$ corresponding to the conic $x_j^2$;
    \item the homogeneous coordinates in $\Pro^{3r+2}$ that are congruent to $j$ modulo $3$, corresponding to choosing $r+1$ times the linear form $x_j$.
\end{itemize}
As a consequence, we have
\begin{equation} \label{eq:pjloc}
    \pi_{1 \ast}(\Pi_1^\ast ([p_j])) = (Q+2l_j)(T+l_j)^{r+1}.
\end{equation}
Combining \eqref{eq:ABloc},\eqref{eq:pjloc} we obtain:
\begin{equation}
    \label{eq:Z1rel}
    \pi_{1 \ast}(H^k) =  \frac{l_0^k(Q+2l_0)(T+l_0)^{r+1}}{(l_0-l_1)(l_0-l_2)}+
    \frac{l_1^k(Q+2l_1)(T+l_1)^{r+1}}{(l_1-l_0)(l_1-l_2)}+
    \frac{l_2^k(Q+2l_2)(T+l_2)^{r+1}}{(l_2-l_0)(l_2-l_1)}.
\end{equation}

It is immediate to see that \eqref{eq:Z1rel} is a symmetric rational function in $l_0,l_1,l_2$. In order to see that it is in fact a polynomial expression with integer coefficients it suffices to show that it does not have poles along the loci $l_i = l_j$. Picking, without loss of generality, $l_0 = l_1$ one sees that the third summand is clearly regular along this locus. As for the first two summands, consider their product by $(l_1-l_0)$, thus clearing the factor in the denominator; it is immediate to see that  setting $l_1=l_0$ the sum of the two terms vanishes, meaning that the original two summands added to a function regular along $l_1=l_0$. By symmetry, it follows that \eqref{eq:Z1rel} are  homogeneous polynomials of degree $r+k$, symmetric in the $l_i$'s.

\subsection{The second component  $Z_2$ of the envelope}
\label{sec:Z2}

We define $Z_2$ to be
\[
Z_2 = \Pro(W_{2,2})\times (\Pro^2)^\vee \times \Pro^r,
\]
with the action of the torus $\Delta$ scaling linearly the  $x_i$'s in the first  factor and the homogeneous coordinates of the second, and acting trivially on the third factor $\Pro^r$.
The  equivariant map $\pi_2:Z_2\to Z$ is given by:
\[
(C,(a_0:a_1:a_2),(b_0:\ldots:b_r))\mapsto (C, (b_0\cdot(a_0x_0+a_1x_1+a_2x_2):\ldots: b_r\cdot(a_0x_0+a_1x_1+a_2x_2))).
\]
Generically, the image $\pi_2(Z_2)$ parameterizes a choice of a conic and $r+1$ linear forms vanishing at two points: generically the two points are distinct and determine a unique line, and therefore the linear forms are all scalar multiples of each other.

The $\Delta$-equivariant Chow ring of $Z_2$ is generated by the equivariant classes $c_1,c_2, c_3$ and the equivariant hyperplane classes of the factors $Q, H, T_2$, where the relationship with the pullbacks of the hyperplane classes  via $\pi_2$ are: $\pi_2^\ast(Q) = Q$ and $\pi_2^\ast (T) = H+T_2$.
By the projection formula, the ideal generated by the image of $\pi_{2 \ast}$ is generated by $\pi_{2\ast}(H^k)$ for $k = 0,1,2$.

Consider the projection function $\Pi_2: Z_2\to (\Pro^2)^\vee$. As in Section \ref{sec:Z1}, the class of $H^k$ may be written in the fixed point basis as:

\begin{equation}\label{eq:ABlocZ2} H^k = \sum_{j = 0}^2 \frac{i_{p^\vee_j}^*(H^k)}{c_{top}^\Delta(T_{p^\vee_{j}}(\Pro^{2})^\vee)}[p_j^\vee] =  \frac{(-l_0)^k}{(l_1-l_0)(l_2-l_0)}[p_0^\vee] +
    \frac{(-l_1)^k}{(l_0-l_1)(l_2-l_1)}[p_1^\vee] +
    \frac{(-l_2)^k}{(l_0-l_2)(l_1-l_2)}[p_2^\vee], 
    \end{equation}
where $[p_j]^\vee$ is the class of the linear form $x_j$ in the dual plane.

The restriction of $\pi_2$ to $\Pi_2^{-1}(p_j^\vee)$ maps isomorphically onto its image, consisting of (the inverse image via the projection to $\Pro^{3r+2}$ of) the linear subspace defined by the vanishing of the homogeneous coordinates 
not congruent to $j$ modulo $3$. It follows that:

 \begin{equation}
    \label{eq:Z2rel}
    \pi_{2 \ast}(H^k) =  \frac{(-l_0)^k(T+l_1)^{r+1}(T+l_2)^{r+1}}{(l_1-l_0)(l_2-l_0)} + \frac{(-l_1)^k(T+l_0)^{r+1}(T+l_2)^{r+1}}{(l_0-l_1)(l_2-l_1)}+
    \frac{(-l_2)^k(T+l_0)^{r+1}(T+l_1)^{r+1}}{(l_0-l_2)(l_1-l_2)}.
\end{equation}
 
Expression \eqref{eq:Z2rel} is clearly symmetric, as any element of $S_3$ permutes the three summands. For any $i\not=j$, it is immediate to see that $(l_i-l_j)\pi_{2 \ast}(H^k)$ vanishes when setting $l_i =l_j$, implying that $\pi_{2 \ast}(H^k)$ is in fact a degree $2r+k$ homogeneous polynomial which is symmetric in the $l_i$'s. 

\subsection{A presentation for $A^\ast(\mathcal{M}_{0}(\Pro^r,2))$ }
\label{sec:pre}
We have all the ingredients to exhibit explicitly the presentation from Lemma \ref{lem:comp}.
Denote by $\alpha^r_{i,k} = \pi_{i \ast}(H^k)_{|Q = 0}$ the relations obtained from the $i$-th component of the envelope,  as computed in  \eqref{eq:Z1rel},\eqref{eq:Z2rel}. From \eqref{eq:excis}
 and Proposition \ref{prop:amb} we obtain:
 \begin{equation}
     \label{eq:presentation}
      A^\ast(\Mcal_{0}(\Pro^r, 2)) \cong \frac{\Z[T,c_2,c_3]}{( 2c_3, (T^3+c_2T+c_3)^{r+1}, \{\alpha_{i,k}^r\}_{ i = 1,2  \
           k = 0,1,2
      })},
 \end{equation}
proving the closed form version of Theorem \ref{thm:mainthm}.

\section{Proof of generating function form of Theorem \ref{thm:mainthm}}

\label{sec:gfform}

In this section we perform algebraic manipulations that allow us to describe the relations from the presentation in Section \ref{sec:pre} as coefficients of appropriate generating functions, and thus give a proof of the generating function version of Theorem \ref{thm:mainthm}. As a starting point we define  generating functions for  relations of the form $\alpha_{i,k}^r$, as $r$ varies.
Define 
\begin{equation}
    \mathcal{A}_{i,k}:=\sum_{r=0}^\infty \alpha_{i,k}^r.
\end{equation}

For the generating functions $\mathcal{R}_j$ from \eqref{eq:Rgf}, we let $\rho_{j,n}$ denote the degree $n$ part of the formal geometric series expansion of  $\mathcal{R}_j$.

We first consider the relations coming from the first component of the envelope.  

\begin{claim} \label{claim:firstenv}
We have the following equality of ideals in $\Z[T,c_2,c_3]/(2c_3)$:
\[
(\alpha_{1,0}^r, \alpha_{1,1}^r,\alpha_{1,2}^r) = (\rho_{1,r},\rho_{1,r+1}).
\]
\end{claim}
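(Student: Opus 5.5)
The plan is to compute the generating functions $\mathcal{A}_{1,k}$ in closed form by partial fractions, reduce them modulo $2c_3$, and read off each $\alpha^r_{1,k}$ as a combination of the coefficients $\rho_{1,n}$. Throughout we work in the graded completion of $\Z[T,l_0,l_1,l_2]$, with $c_1=0$, and extract homogeneous components.

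\emph{Step 1: the closed form of $\alpha^r_{1,k}$.} Setting $Q=0$ in \eqref{eq:Z1rel} gives
\[
\alpha^r_{1,k}=2\sum_{j=0}^2 \frac{l_j^{k+1}(T+l_j)^{r+1}}{\prod_{m\neq j}(l_j-l_m)} .
\]
This has degree $r+k$.

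\emph{Step 2: summing over $r$.} Put $z=1-T$, so that $T+l_j=1-(z-l_j)$. Summing the geometric series formally, graded degree by graded degree, gives
\[
\sum_{r\ge 0}(T+l_j)^{r+1}=\frac{1}{z-l_j}-1 .
\]
Hence
\[
\mathcal{A}_{1,k}=2\sum_j \frac{l_j^{k+1}}{(z-l_j)\prod_{m\neq j}(l_j-l_m)}-2\sum_j\frac{l_j^{k+1}}{\prod_{m\neq j}(l_j-l_m)} .
\]
The last sum equals $2h_{k-1}(l)$. It is concentrated in the single degree $k-1$, which is $0$ for $k=1$ and $1$ for $k=2$, and it vanishes for $k=0$. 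It therefore contributes nothing to $\alpha^r_{1,k}$ for $r\ge 1$; I would check the case $r=0$ directly.

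For the first sum, write $l_j^{k+1}=z^{k+1}-(z-l_j)q_k(l_j)$ with $\deg q_k\le k$. The $q_k$-terms contribute $h$-polynomials of degree at most $k-2\le 0$, which are again negligible. The main term comes from the partial fraction identity
\[
\sum_j\frac{1}{(z-l_j)\prod_{m\neq j}(l_j-l_m)}=\frac{1}{\prod_j(z-l_j)}=\frac{1}{z^3+c_2z+c_3},
\]
where the last equality uses $e_1(l)=-c_1=0$, $e_2(l)=c_2$, $e_3(l)=-c_3$. So, up to low-degree corrections,
\[
\mathcal{A}_{1,k}\doteq \frac{2z^{k+1}}{z((1-T)^2+c_2)+c_3}.
\]

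\emph{Step 3: killing $c_3$.} Because the numerator carries a factor $2$ and $2c_3=0$, we can expand $1/(zD+c_3)$, with $D=(1-T)^2+c_2$, as $\frac{1}{zD}\sum_{n\ge 0}(-c_3/(zD))^n$. Every term with $n\ge 1$ is killed by $2c_3=0$. This yields
\[
\mathcal{A}_{1,k}\equiv z^{k}\,\mathcal{R}_1 \pmod{2c_3}.
\]
Extracting the degree $r+k$ parts:
\begin{itemize}
\item $\alpha^r_{1,0}=\rho_{1,r}$;
\item $\alpha^r_{1,1}=\rho_{1,r+1}-T\rho_{1,r}$;
\item $\alpha^r_{1,2}=\rho_{1,r+2}-2T\rho_{1,r+1}+T^2\rho_{1,r}$.
\end{itemize}

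\emph{Step 4: the ideal equality.} The identity $((1-T)^2+c_2)\mathcal{R}_1=2$ gives the recursion
\[
\rho_{1,n}-2T\rho_{1,n-1}+(T^2+c_2)\rho_{1,n-2}=0 \qquad (n\ge 1).
\]
Applying it with $n=r+2$ turns the expression for $\alpha^r_{1,2}$ into $-c_2\rho_{1,r}$. The three $\alpha$'s are therefore $\rho_{1,r}$, $\rho_{1,r+1}-T\rho_{1,r}$ and $-c_2\rho_{1,r}$. These generate exactly the ideal $(\rho_{1,r},\rho_{1,r+1})$, since the change of generators between $\{\alpha^r_{1,0},\alpha^r_{1,1}\}$ and $\{\rho_{1,r},\rho_{1,r+1}\}$ is unitriangular.

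\emph{Main obstacle.} The delicate points are the bookkeeping in Steps 2--3. One must verify that the discarded terms really live only in degrees below $r+k$: the $h_{k-1}$ and $q_k$ corrections, together with the mismatch between $\sum_{r\ge0}$ and the degree grading. Any case where they do not, essentially $r=0$, has to be checked by hand. One must also justify that the reduction modulo $2c_3$ is legitimate coefficientwise, since the whole expression has been divided by $zD$. I would handle the second point by clearing denominators: verify $(zD+c_3)\mathcal{A}_{1,k}=2z^{k+1}+(\text{low degree})$ as an identity of power series, then compare with $zD\cdot z^k\mathcal{R}_1=2z^{k+1}$.
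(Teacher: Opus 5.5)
Your proposal is correct and follows essentially the paper's route. You sum the geometric series, use the partial-fraction identity to get $\mathcal{A}_{1,k}$ in closed form up to constants, discard the $c_3$-terms modulo $2c_3$ to reach $z^k\mathcal{R}_1$, and read off $\alpha^r_{1,0}=\rho_{1,r}$, $\alpha^r_{1,1}=\rho_{1,r+1}-T\rho_{1,r}$ and $\alpha^r_{1,2}=-c_2\rho_{1,r}$.

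The differences are cosmetic. The paper writes $\mathcal{A}_{1,2}\equiv -c_2\mathcal{A}_{1,0}$ directly, whereas you reach the same identity through the recursion coming from $((1-T)^2+c_2)\mathcal{R}_1=2$. Your bookkeeping is also more careful than the paper's. The corrections are constants living only in degree $0$, and $\alpha^r_{1,1}$ and $\alpha^r_{1,2}$ have degree $r+k\ge 1$, so the case $r=0$ needs no separate check. Your coefficientwise justification of the reduction modulo $2c_3$ is also sound.
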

\begin{proof}
We explicitly compute the generating function $\mathcal{A}_{1,0}$. Setting $Q=0$ in \eqref{eq:Z1rel}, and recognizing that each summand gives rise to a geometric series, we obtain:
\begin{align}
  \mathcal{A}_{1,0} 
  &=  \frac{2l_0}{(l_0-l_1)(l_0-l_2)(1-T-l_0)}+
\frac{2l_1}{(l_1-l_0)(l_1-l_2)(1-T-l_1)}+
    \frac{2l_2}{(l_2-l_0)(l_2-l_1)(1-T-l_2)}
  \nonumber \\
  &=\frac{2(1-T)}{(1-T-l_0)(1-T-l_1)(1-T-l_2)} = \frac{2(1-T)}{(1-T)^3+(1-T)c_2+c_3}.
\end{align}
Viewing the denominator of $\mathcal{A}_{1,0}$ as having the shape $1+P(T,c_2,c_3)$, with $P$ a polynomial vanishing at $(0,0,0)$, the relations are  obtained by expanding the denominator as a geometric series. Because of the factor of $2$ in the numerator, any term containing $c_3$ appears with an even coefficient and therefore vanishes in the ring $\Z[T,c_2,c_3]/(2c_3)$.
It follows that replacing $P(T,c_2, c_3)$ by $P(T,c_2,0)$ does not change any of the relations. Setting $c_3 = 0$, it is immediate to observe that $\mathcal{A}_{1,0}$ reduces to $\mathcal{R}_1$. We have therefore shown that, for every $r$, 
\begin{equation} \label{eq:a0rr}
\alpha_
{1,0}^r = \rho_{1,r},
\end{equation}
where we recall that the second lower index for the $\rho$'s indicates the degree of the relation rather than the power of the hyperplane class pushed forward.

Resumming in an analogous fashion all the relations $\alpha_{1,1}^r$  one obtains:
\begin{equation}
    \mathcal{A}_{1,1} = (1-T)\mathcal{A}_{1,0}-2,
\end{equation}
from which it follows that
\begin{equation}\label{eq:a0rr+1}
    \alpha_{1,1}^r = \alpha_{1,0}^{r+1}-T\alpha_{1,0}^r =\rho_{1,r+1}-T\rho_{1,r} 
\end{equation}
Finally, resumming over $r$ the relations $\alpha_{1,2}^r$ one obtains:
\begin{equation}
 \mathcal{A}_{1,2} = -c_2\mathcal{A}_{1,0},   
\end{equation}
implying
\begin{equation}\label{eq:a0rc2}
    \alpha_{1,2}^r = -c_2\alpha_{1,0}^{r} = -c_2\rho_{1,r}.
\end{equation}
The claim immediately follows from \eqref{eq:a0rr},\eqref{eq:a0rr+1},\eqref{eq:a0rc2}.
\end{proof}
 We proceed analogously with the relations from the second envelope.
\begin{claim}\label{claim:secondenv}
We have the following equality of ideals in $\Z[T,c_2,c_3]/(2c_3)$:
\[
(\alpha_{2,0}^r, \alpha_{2,1}^r,\alpha_{2,2}^r) = (\rho_{2,2r},\rho_{2,2r-2}(T^3+c_2T+c_3),\rho_{2,2r+2}).
\]
\end{claim}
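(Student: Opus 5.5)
The plan is to imitate the proof of Claim~\ref{claim:firstenv}: resum the relations $\alpha_{2,k}^r$ over $r$ into rational generating functions $\mathcal{A}_{2,k}$, and identify these with polynomial multiples of $\mathcal{R}_2$ modulo $(c_1, 2c_3)$. Since $\alpha_{2,k}^r$ is homogeneous of degree $2r+k$, each coefficient of $\mathcal{A}_{2,k}$ is then read off as a $\Z[T,c_2,c_3]$-linear combination of finitely many $\rho_{2,n}$.

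First I write each summand of \eqref{eq:Z2rel} (at $Q=0$) as a geometric series in $r$. Set $u_j := (T+l_a)(T+l_b)$, where $\{j,a,b\}=\{0,1,2\}$. Then
\[
\mathcal{A}_{2,k} = \sum_{j=0}^2 \frac{(-l_j)^k\, u_j}{\prod_{i\neq j}(l_i-l_j)\,(1-u_j)} .
\]
Putting everything over the common denominator $D:=\prod_{j}(1-u_j)$ gives a numerator which is antisymmetric in the $l_i$. It is therefore divisible by the Vandermonde determinant, and the quotient is a symmetric polynomial $N_k$. The key computation is to expand $D$ in elementary symmetric functions, using $c_1=0$, $\sum_{i<j} l_il_j=c_2$ and $l_0l_1l_2=-c_3$. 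Note that $u_j = T^2 + T(l_a+l_b) + l_al_b = T^2 - Tl_j - c_3/l_j$ when $c_1=0$, so symmetric expressions in the $u_j$ reduce to polynomials in $T,c_2,c_3$. I expect $D$ to coincide with the denominator of $\mathcal{R}_2$, either exactly or modulo $2c_3$. If the match is only modulo $2c_3$, I will argue as in Claim~\ref{claim:firstenv}: in the geometric expansion of $1/D$, any discrepancy carries an even multiple of $c_3$ and is therefore harmless in the quotient by $(2c_3)$.

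Next I compute the three low-degree numerators $N_0,N_1,N_2$ explicitly, each of small degree, together with any polynomial correction analogous to the constant $-2$ appearing in $\mathcal{A}_{1,1}$. Comparing homogeneous parts of degree $2r+k$ then gives formulas of the shape
\[
\alpha_{2,k}^r=\sum_{m} n_{k,m}\,\rho_{2,2r+k-m},
\]
where $n_{k,m}$ denotes the degree-$m$ part of $N_k$. I expect, for instance, $\alpha_{2,0}^r$ to be a combination such as $\rho_{2,2r+2}-T^2\rho_{2,2r}$ or similar, $\alpha_{2,1}^r$ to involve $\rho_{2,2r}$ and $\rho_{2,2r-2}$ with coefficients built from $T$ and $c_2$, and $\alpha_{2,2}^r$ to involve $\sigma\rho_{2,2r-2}$, where $\sigma = T^3+c_2T+c_3$. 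The factor $\sigma$ should emerge naturally, because $\prod_j (T+l_j)=\sigma$ when $c_1=0$, and products $(-l_j)u_j$ reorganize into multiples of $\sigma$.

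Finally I prove the equality of ideals. This requires showing that the inclusion of $(\alpha_{2,0}^r,\alpha_{2,1}^r,\alpha_{2,2}^r)$ into $(\rho_{2,2r},\sigma\rho_{2,2r-2},\rho_{2,2r+2})$ is given by a $3\times3$ matrix of polynomial coefficients that is unitriangular, or at least invertible over $\Z[T,c_2,c_3]/(2c_3)$. When it is not immediately invertible, I will use the recursion for $\rho_{2,n}$ coming from $D\cdot\mathcal{R}_2=1$ to rewrite stray terms, for example $\rho_{2,2r+1}$ or $\rho_{2,2r-1}$, in terms of the three target generators. I expect this step to be the main obstacle: identifying $D$ with the denominator of $\mathcal{R}_2$ and extracting $\sigma$ cleanly from the numerators is where the bookkeeping could fail, possibly only modulo $2c_3$. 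The small cases $r=1,2$ can be used as a sanity check on the coefficients.
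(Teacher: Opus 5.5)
Your plan is correct and is essentially the paper's proof. Carried out, it gives $\alpha_{2,0}^r=\rho_{2,2r}$ (so in fact $N_0=1$, not the shape you guessed), $\alpha_{2,1}^r=T\rho_{2,2r}-\sigma\rho_{2,2r-2}$ and $\alpha_{2,2}^r=\rho_{2,2r+2}-(2T^2+c_2)\rho_{2,2r}+T\sigma\rho_{2,2r-2}$ with $\sigma=T^3+c_2T+c_3$, a triangular change of generators with diagonal $1,-1,1$. Your caution about $2c_3$ is warranted: $\prod_j(1-u_j)$ differs from the denominator of $\mathcal{R}_2$ by $2c_3\bigl(T(2-T^2-c_2)+c_3\bigr)$, and $\prod_j(T+l_j)=T^3+c_2T-c_3$, so these identities hold only in $\Z[T,c_2,c_3]/(2c_3)$, exactly as your fallback argument handles.
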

\begin{proof}
We adopt a similar strategy as in the previous claim. We begin by summing over all $r$ the relations $\alpha_{2,0}^r$ from \eqref{eq:Z2rel}, to obtain
\begin{align}
    \mathcal{A}_{2,0} &= \frac{1}{(l_1 - l_0)(l_2 - l_0)(1 - (T + l_1)(T + l_2))} +\frac{1}{(l_0 - l_1)(l_2 - l_1)(1 - (T + l_0)(T + l_2))} \nonumber\\
     &+\frac{1}{(l_0 - l_2)(l_1 - l_2)(1 - (T + l_0)(T + l_1))} \nonumber
    \\
     & = -\frac{1}{((T^2 + (l_0 + l_1)T + l_0l_1 - 1)(T^2 + (l_0 + l_2)T + l_0l_2 - 1)(T^2 + (l_1 + l_2)T + l_1l_2 - 1))}\nonumber\\
     & =  \frac{1}{(1-T^2)((1-T^2-c_2)^2+c_2) -c_2^2 +(T+c_3)c_3} = \mathcal{R}_2.
\end{align}
This computation shows that for any $r$
\begin{equation}\label{eq:seconenvforstrel}
    \alpha_{2,0}^r = \rho_{2,2r}
\end{equation}
Analogously resumming all the relations $\alpha_{2,k}^r$, for $k = 1, 2$, one obtains: 
\begin{align}\label{eq:A20}
  \mathcal{A}_{2,1}&=
-\frac{T^3+(c_2-1)T+c_3}{(1-T^2)((1-T^2-c_2)^2+c_2) -c_2^2 +(T+c_3)c_3} = -(T^3+c_2T+c_3 -T)\mathcal{R}_2,\nonumber\\
  \mathcal{A}_{2,2} +1 &=
 \frac{T^4 + (c_2 - 2)T^2 + c_3T - c_2 + 1}{(1-T^2)((1-T^2-c_2)^2+c_2) -c_2^2 +(T+c_3)c_3} = (T^4+c_2T^2+c_3T -2T^2-c_2+1)\mathcal{R}_2.
 \end{align}

At the level of coefficients, \eqref{eq:A20} gives:
\begin{align}\label{eq:seconenvrelequiv}
    \alpha_{2,1}^r = T\rho_{2,2r}-(T^3+c_2T+c_3)\rho_{2,2r-2},\nonumber\\
     \alpha_{2,2}^r = -T\alpha_{2,1}^r-(T^2+c_2)\rho_{2,2r}+\rho_{2,2r+2}.
\end{align}
Equations \eqref{eq:seconenvforstrel},\eqref{eq:seconenvrelequiv} together immediately establish the Claim.
\end{proof}

Claims \ref{claim:firstenv} and \ref{claim:secondenv} allow to substitute the relations $\alpha_{i,k}^r$'s with the corresponding (multiples of) $\rho$'s in presentation \eqref{eq:presentation}, thus concluding the proof of Theorem \ref{thm:mainthm}.

\section{Low dimensional examples}

We conclude the article with a few explicit examples of the presentations obtained from Theorem 
\ref{thm:mainthm}. In Table \ref{tab:conics} we write generators for the ideal of relations for the presentation of $A^\ast(\Mcal_0(\Pro^r,2))$, for $r = 1,2,3$.
We collect in the table a redundant set of generators coming from the equivariant envelope. It is sufficient to consider only the relations from one of the two columns: the first column expresses the pushforwards of $H^k$, thus giving the relations more naturally arising from the envelope; the second column gives the relations in terms of the coefficients of the two generating functions $\mathcal{R}_1, \mathcal{R}_2$, i.e. the relations arising from the most efficient generating function packaging. 
\pagebreak

\begin{table}[t]
$\Mcal_0(\Pro^1,2)$
\vspace{0.3cm}

    \centering
\begin{tabular}{|l|l|}
\hline
\multicolumn{2}{|c|}{Ambient relations: $2c_3, (T^3+c_2T+c_3)^2$}\\
\hline
\multicolumn{2}{|c|}{Classes from $Z_1$}\\
\hline
$\alpha_{1,0}^1=4T$ &  $\rho_{1,1} =4T$ 
 \\
$\alpha_{1,1}^1 = 2T^2 - 2c_2$ & $\rho_{1,2} =
6T^2 - 2c_2
$\\
\hline
\multicolumn{2}{|c|}{Classes from $Z_2$}  \\
\hline
$\alpha_{2,0}^1=3T^2+c_2$&
$\rho_{2,2} =3T^2+c_2$
\\
$\alpha_{2,1}^1=2T^3+c_3$& $\rho_{2,0}(T^3+c_2T+c_3) =T^3+c_2T+c_3$\\
$\alpha_{2,2}^1=T^4-c_2T^2$&$\rho_{2,4} = 6T^4 + 3c_2T^2 + c_3T + c_2^2$\\
\hline
\end{tabular}
\vspace{0.3cm}

$\Mcal_0(\Pro^2,2)$
\vspace{0.3cm}

    \centering
\begin{tabular}{|l|l|}
\hline
\multicolumn{2}{|c|}{Ambient relations: $2c_3, (T^3+c_2T+c_3)^3$}\\
\hline
\multicolumn{2}{|c|}{Classes from $Z_1$}\\
\hline
$\alpha_{1,0}^2=6T^2 - 2c_2$ &  $\rho_{1,2} =6T^2 - 2c_2$ 
 \\
$\alpha_{1,1}^2 = 2T^3 - 6c_2T$ & $\rho_{1,3} =
8T^3 - 8c_2T
$\\
\hline
\multicolumn{2}{|c|}{Classes from $Z_2$}  \\
\hline
$\alpha_{2,0}^2=6T^4 + 3c_2T^2+c_3T+c_2^2$&
$\rho_{2,4} =6T^4 + 3c_2T^2+c_3T+c_2^2$
\\
$\alpha_{2,1}^2=3T^5 -c_2T^3 + c_2c_3$& $\rho_{2,2}(T^3+c_2T+c_3) =3T^5 +4c_2T^3+c_3T^2+c_2^2T+c_2c_3$\\
$\alpha_{2,2}^2=T^6 -3c_2T^4 + c_3T^3 + c_3^2$&$\rho_{2,6} = 10T^6+5c_2T^4+4c_2^2T^2+c_3^2+c_2^3$\\
\hline
\end{tabular}

\vspace{0.3cm}

$\Mcal_0(\Pro^3,2)$
\vspace{0.3cm}

    \centering
\begin{tabular}{|l|l|}
\hline
\multicolumn{2}{|c|}{Ambient relations: $2c_3, (T^3+c_2T+c_3)^4$}\\
\hline
\multicolumn{2}{|c|}{Classes from $Z_1$}\\
\hline
$\alpha_{1,0}^3=8T^3 - 8c_2T$ &  $\rho_{1,3} =8T^3 - 8c_2T$ 
 \\
$\alpha_{1,1}^3 =2T^4 - 12c_2T^2 + 2c_2^2 $ & $\rho_{1,4} =10T^4 - 20c_2T^2 + 2c_2^2

$\\
\hline
\multicolumn{2}{|c|}{Classes from $Z_2$}  \\
\hline
$\alpha_{2,0}^3=10T^6 + 5c_2T^4  + 4c_2^2T^2  + c_2^3 + c_3^2$&
$\rho_{2,6} =10T^6 + 5c_2T^4  + 4c_2^2T^2  + c_2^3 + c_3^2$
\\
$\alpha_{2,1}^3=4T^7 - 4c_2T^5 + c_3T^4 + c_2^2c_3$& $\rho_{2,4}(T^3+c_2T+c_3) =6T^7 + 9c_2T^5 + c_3T^4 + 4c_2^2T^3  + c_2^3T + c_3^2T + c_2^2c_3$\\
$\alpha_{2,2}^3=T^8 - 6c_2T^6  + c_2^2T^4  + c_2c_3^2$&$\rho_{2,8} = 15T^8 + 5c_2T^6 +c_3T^5 + 10c_2^2T^4  + 5c_2^3T^2 + c_3^2T^2 +c_2^2c_3T + c_2^4 $\\
\hline
\end{tabular}
    \caption{Generators for the ideal of relations in the presentation for $A^\ast(\Mcal_0(\Pro^r,2))$, for $r = 1,2,3$ from Theorem \ref{thm:mainthm}.}
    \label{tab:conics}
\end{table}

\bibliographystyle{alpha} 
\bibliography{Kontsevich.bib} 

\end{document}